\documentclass[12pt,tikz]{article}
\usepackage{amsmath, amsthm, amssymb,enumitem}
\usepackage[top=1cm,left=1cm,right=1cm,bottom=1.5cm]{geometry}

\theoremstyle{plain}
\usepackage[table]{xcolor}
\usepackage{tikz}
\usetikzlibrary{patterns}
\usetikzlibrary{calc}

\usepackage{diagbox}

\newtheorem{theorem}{Theorem}
\newtheorem{lemma}[theorem]{Lemma}

\newtheorem{corollary}[theorem]{Corollary}

\usepackage{array,color,colortbl}
\providecommand{\mk}{\cellcolor[gray]{.8}}

\def\cref#1{Conjecture~$\ref{#1}$}
\def\Cref#1{Corollary~$\ref{#1}$}

\renewcommand{\geq}{\geqslant}
\renewcommand{\leq}{\leqslant}

\renewcommand{\emptyset}{\varnothing}

\def\={\equiv}

\def\dfrac#1#2{\lower0.15ex\hbox{\large$\frac{#1}{#2}$}} 

\newcommand{\Mod}[1]{\ (\mathrm{mod}\ #1)}

\title{Defining sets which intersect each Latin trade at least twice}

\author{
	Richard Bean\thanks{
		Cyber Research Centre, University of Queensland, 4072, Australia.}\\ 
	\texttt{r.bean1@uq.edu.au}\\
	\and
	Nicholas J. Cavenagh\thanks{
		Department of Mathematics,
		The University of Waikato,
		Private Bag 3105,
		Hamilton 3240, New Zealand.}\\
	\texttt{nickc@waikato.ac.nz}\\ 
}

\begin{document}
	
	\date{}
	\maketitle
	
	\begin{abstract}
		A defining set of a Latin square is a partially filled-in Latin square which completes to no other Latin square of the same order. We introduce the concept of a $k$-strong defining set, in which if less than $k$ entries are deleted, the property of being a defining set is retained. 
		Equivalently, a $k$-strong defining set intersects every Latin trade in the Latin square at least $k$ times. 
		In the addition table for integers modulo $n$, when $n$ is even we determine the minimum size of a $k$-strong defining set for any $k$. For odd $n$ we give a construction for a minimally $2$-strong defining set. We furthermore give computational results for Latin squares of small orders. 
	\end{abstract}
	
	\noindent {\bf MSC 2010 Codes: 05B15}
	
	\noindent {Keywords: Latin square, Latin trade, defining set.}  
	
	\section{Introduction}
	
	In what follows, rows and columns of an $n\times n$ array $L$ are each
	indexed by a set  $N(n)$ of size $n$, with $L_{i,j}$ denoting the \emph{symbol} 
	in cell $(i,j)$. We sometimes consider an array~$L$ to be a
	set of ordered triples $L=\{(i,j;L_{i,j})\}$ so that the notion of a
	subset of an array is precise. 
	A \emph{partial Latin square} (PLS) of \emph{order} $n$ is an $n\times
	n$ array in which some cells may be empty, the set of possible symbols 
	$N(n)$ has size $n$, and
	each symbol occurs at most once in each row and
	column.
	A \emph{Latin square} is a PLS with no empty cells.  
	
	A \emph{Latin trade} is a non-empty PLS $T$ such that there exists another PLS $T'$ of the same order, where: (a) $T\cap T'=\emptyset$, (b) 
	$T$ and $T'$ have the same set of non-empty cells, (c) $T$ and $T'$ share the same set of symbols in each row; and 
	(d) $T$ and $T'$ share the same set of entries in each column. We say that $T'$ is the \emph{disjoint mate} of the Latin trade $T$. In some literature the pair $(T,T')$ are together called a \emph{bitrade}. 
	If we take any two distinct Latin squares $L$ and $L'$ of the same order, observe that 
	$L\setminus L'$ is a Latin trade with disjoint mate  $L'\setminus L$.  Thus Latin trades describe the differences between a Latin square and any other square  of the same order.

	A \emph{defining set} $D$ of a Latin square $L$ is a subset of $L$ such that if $L'$ is a distinct Latin square of the same order as $L$, $D$ is not a subset of $L'$. We sometimes say that $D$ has a \emph{unique completion} to $L$. If $D$ is minimal with respect to this property we say that $D$ is a \emph{critical set} of $L$.
	Necessarily, a defining set $D$ of a Latin square $L$ intersects every Latin trade $T$ in $L$. 
	(If not, $D$ completes to the distinct Latin squares $L$ and $(L\setminus T)\cup T'$.) 
	Thus the theory of defining sets and Latin trades are very much intertwined.

	We define $B_n$ to be the operation table for the integers modulo $n$. 
	That is, 
	$$B_n:=\{(i,j;i+j\Mod{n}): i,j\in {\mathbb Z}_n \}.$$
	This particular Latin square may have some extremal properties in terms of Latin trades and critical sets, which we now outline. 
	Let $p$ be the smallest prime dividing $n$. 
	The size of the smallest Latin trade in $B_n$ was shown to be at least $e\log{p}+3$  \cite{DK}. 
	Conversely, in \cite{Sz} it is shown that for all $n$, $B_n$ contains a Latin trade of 
	size at most $5\log_2{p}$. 
	The smallest possible size of a Latin trade is $4$, given by any $2\times 2$ subsquare. Such Latin trades are called {\em intercalates}. 
	Meanwhile, any Latin square has a Latin trade of size at most $8\sqrt{n}$ \cite{CR}. 
	It is conjectured in \cite{CR} that if $L$ is a Latin square of order prime $p$, the smallest 
	Latin trade in $L$ is no larger than the smallest Latin trade in $B_p$. 
	
	Let scs$(L)$ be the size of the smallest critical set in a Latin square $L$ and let scs$(n)$ in turn be the size of the smallest critical set among all Latin squares of order $n$. 
	In \cite{HV} it was shown that 
	scs$(n)\geq n^2/10^4$ for sufficiently large $n$. 
	Conversely, it has been shown that scs$(n)\leq \lfloor n^2/4\rfloor$ for all $n\geq 1$ \cite{CDS,DC}. 
	Moreover, it is conjectured that equality holds in this bound, a conjecture that we know is true when $n\leq 8$ \cite{Be05}. 
	Interestingly, examples of critical sets of size $\lfloor n^2/4\rfloor$ are, in the present state of the literature,  known only to exist in $B_n$ \cite{CDS,DC}. 
	
	We next introduce an idea which measures the amount of information in a defining set that can be ``lost'' while retaining the property of being a defining set. 
	We say that a defining set $D$ of a Latin square $L$ is  \emph{$k$-strong} 
	if for every trade $T\subseteq L$, 
	$|D\cap T|\geq k$. 
	In turn, a defining set $D$ is said to be \emph{minimally $k$-strong} if it is $k$-strong but any strict subset of $D$ is not.  
	By definition, any defining set is 
	$1$-strong and any critical set is minimally $1$-strong. 
	Observe also that if $D$ is a $k$ strong defining set, than $D$ is also a $k'$-strong defining set for each $1\leq k'<k$. 
	
	Since the number of trades in a Latin square of order $n$ is one less than the number of Latin squares of order $n$, verifying that a subset is $k$-strong by checking every trade can be cumbersome. The following lemma gives a more efficient method, which we make use of in Section 3.

	\begin{lemma}
		Let $D\subset L$ where $L$ is a Latin square.
		Then $D$ is a $k$-strong defining set if and only if for any $D'\subset D$ such that $|D'|< k$, $D\setminus D'$ is a defining set. 
		A $k$-strong defining set $D$ is, in turn, minimally $k$-strong if and only if for each 
		triple $(i,j;L_{i,j})\in D$ 
		there exists a Latin trade $T\subseteq L$ such that
		$(i,j;L_{i,j})\in T$ and 
		$|T\cap D|=k$. 
		\label{intermed}
	\end{lemma}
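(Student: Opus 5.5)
The plan is to reduce everything to one basic equivalence, already noted in the introduction: a subset $E\subseteq L$ is a defining set if and only if $E\cap T\neq\emptyset$ for every Latin trade $T\subseteq L$. First I will record both directions carefully. If $E$ misses some trade $T$ with disjoint mate $T'$, then $(L\setminus T)\cup T'$ is a Latin square distinct from $L$ that contains $E$. Conversely, if $E\subseteq L'$ for some Latin square $L'\neq L$, then $T=L\setminus L'$ is a Latin trade in $L$, and $E\cap T=\emptyset$ because $E\subseteq L\cap L'$. Everything else is bookkeeping with this fact.

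For the first claim, suppose $D$ is $k$-strong and $|D'|<k$. For any trade $T$ we have
$$|(D\setminus D')\cap T|\geq |D\cap T|-|D'|\geq k-(k-1)\geq 1,$$
so $D\setminus D'$ meets every trade and is therefore a defining set. Conversely, suppose every such $D\setminus D'$ is a defining set, and suppose for contradiction that some trade $T$ has $|D\cap T|<k$. Take $D'=D\cap T$. Then $|D'|<k$, yet $D\setminus D'$ misses $T$, so it is not a defining set, a contradiction. Hence $|D\cap T|\geq k$ for every trade $T$. I also need that $D$ itself is a defining set; this follows from taking $D'=\emptyset$, or from $k\geq1$.

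For the second claim, let $D$ be $k$-strong. First suppose that for every $x\in D$ there is a trade $T\ni x$ with $|T\cap D|=k$. Any strict subset $E\subsetneq D$ omits some $x\in D$, and then $|E\cap T|\leq k-1$ for the corresponding $T$, so $E$ is not $k$-strong. Thus $D$ is minimally $k$-strong.

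Conversely, suppose some $x\in D$ lies in no trade $T$ with $|T\cap D|=k$. Since $D$ is $k$-strong, every trade containing $x$ meets $D$ in at least $k+1$ entries. Set $E=D\setminus\{x\}$. Trades not containing $x$ satisfy $|E\cap T|=|D\cap T|\geq k$. Trades containing $x$ satisfy $|E\cap T|\geq k+1-1=k$. So $E$ is a $k$-strong strict subset of $D$, and $D$ is not minimally $k$-strong.

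There is one subtlety to handle. Minimality should be checked only against strict subsets, and by the monotonicity just used it suffices to check subsets of the form $D\setminus\{x\}$. Also, ``$k$-strong defining set'' should include the defining-set property, which for $k\geq1$ is implied by meeting every trade. I expect no real obstacle here; the only point requiring care is the initial trade/defining-set equivalence, specifically the direction that produces the trade $L\setminus L'$.
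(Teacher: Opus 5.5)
Your proof is correct and follows essentially the same route as the paper: both rest on the fact that a subset of $L$ is a defining set exactly when it meets every Latin trade, both take $D'=D\cap T$ for the converse of the first claim, and both compare $D$ with $D\setminus\{x\}$ for the minimality claim. The differences are cosmetic. You handle arbitrary strict subsets directly and argue one direction contrapositively, which makes explicit the monotonicity step (reducing to single-element deletions) that the paper leaves implicit.
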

	
	\begin{proof}
		Suppose first that $D$ is $k$-strong. 
		Then by definition, $D$ intersects every Latin trade in $L$ at least $k$ times. In turn, for any $D'\subseteq D$
		such that $|D'|<k$, 
		$D\setminus D'$ intersects every Latin trade in $L$ at least once, and is thus a defining set. 
		Conversely, suppose that 
		$D$ is a subset of a Latin square 
		such that for any $D'\subset D$ such that $|D'|< k$, $D\setminus D'$ is a defining set. 
		Suppose there exists a trade $T\subseteq L$ such that 
		$|T\cap D|<k$. 
		Let $T'$ be a disjoint mate of $T$. 
		Then letting $D'=T\cap D$, 
		$D\setminus D'$ is a subset of the Latin square 
		$(L\setminus T)\cup T'\neq L$. Therefore $D\setminus D'$ is not a defining set for $L$, a contradiction.  
		Thus the first claim of the lemma is satisfied. 
		
		Next, suppose that 
		$D$ is minimally $k$-strong. Then for 
		any $(i,j;L_{i,j})\in D$, 
		$D'=D\setminus \{(i,j;L_{i,j})\}$ is not $k$-strong.
		Thus there exists a Latin trade $T\subseteq L$ such that   
		$|T\cap D|\geq k$ but  
		$|T\cap D'|<k$. Thus $\{(i,j;L_{i,j})\}\in T$. 
		Finally, 
		suppose that $D$ is $k$-strong but not minimally so.
		Then there exists  $(i,j;L_{i,j})\in D$
		such that $D\setminus \{(i,j;L_{i,j})\}$ is $k$-strong.
		By definition, any trade $T$ intersects 
		$D\setminus \{(i,j;L_{i,j})\}$
		at least $k$ times. 
		Therefore for any trade $T$ 
		such that  $\{(i,j;L_{i,j})\}\in T$, 
		$|T\cap D|>k$.
	\end{proof}
	
	Observe the following. 
	
	\begin{lemma}
		A Latin square $L$ is $d$-strong if and only if $d$ is the size of the smallest Latin trade in $L$. 
		\label{minny}
	\end{lemma}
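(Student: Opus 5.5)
We read the statement as follows: the Latin square $L$, regarded as a subset of itself, is a $d$-strong defining set, and $d$ is the largest integer for which this holds. (Read literally, ``$k$-strong'' is monotone in $k$, so ``if and only if $d$ is the size of the smallest trade'' must mean that $d$ is the maximal strength.) We prove two things: that $L$ is $d$-strong when $d$ is the minimum size of a Latin trade in $L$, and that $L$ is not $(d+1)$-strong.

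\emph{$L$ is $d$-strong.} Any Latin trade $T\subseteq L$ satisfies $T\cap L=T$, so $|T\cap L|=|T|\geq d$ by minimality of $d$. This is exactly the definition of $L$ being $d$-strong. Alternatively, Lemma~\ref{intermed} gives the same conclusion: if $D'\subset L$ with $|D'|<d$, then $L\setminus D'$ meets every trade, so it is a defining set.

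\emph{$L$ is not $(d+1)$-strong.} Let $T_0\subseteq L$ be a trade with $|T_0|=d$. Then $|T_0\cap L|=d<d+1$. Hence $L$ is not $(d+1)$-strong, and so by the monotonicity noted after the definition it is not $k$-strong for any $k>d$. The converse direction of the ``if and only if'' follows: if $L$ is $d'$-strong but not $(d'+1)$-strong, the two facts above force $d'=d$.

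There is one point to address. For $n\geq 2$ there is always a second Latin square of order $n$ (permute two symbols), so the set of trades in $L$ is nonempty and $d$ is well defined. For $n=1$ there are no trades, so the statement is vacuous or degenerate, and we note it as excluded. No step here is a genuine obstacle; the only care needed is in fixing the intended meaning of ``$d$-strong'' as the exact maximal strength.
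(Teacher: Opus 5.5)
Your proposal is correct and is essentially the argument the paper has in mind: the paper states this lemma as an observation without proof, and its content is exactly that $T\cap L=T$ for every trade $T\subseteq L$, so $L$ is $k$-strong precisely when $k$ is at most the minimum trade size. You are also right to read the statement as ``$d$ is the largest $k$ for which $L$ is $k$-strong'', since by monotonicity the literal equivalence would fail for every $k<d$.
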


	Note that $L$ is only in turn minimally $d$-strong if every element of $L$ belongs to a Latin trade of size $d$. For example, there is a Latin square of order $5$ which contains an intercalate but for which not every element is in an intercalate. 
	Since Latin squares without intercalates are rare as $n$ grows large  
	(see \cite{MW}), most Latin squares are $4$-strong.  
	The following implies that if $d$ is the size of the smallest Latin trade in a Latin square $L$, we can find
	a chain of PLSs $D_1\subset D_2\subset \dots \subset D_d\subseteq L$ such that $D_k$ is minimally $k$-strong for 
	$1\leq k\leq d$. 
	\begin{lemma}
		Let $P$ be a minimally $k$-strong subset of a Latin square $L$ of order $n\geq 2$, where $k\geq 1$. 
		Then there is a {\rm PLS} $Q\subset P$ such that 
		$Q$ is minimally $(k-1)$-strong.  
		\label{CHAIN}
	\end{lemma}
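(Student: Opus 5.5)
The plan is to delete a single entry of $P$ and then prune greedily, using only the definition of $k$-strong, i.e.\ $|D\cap T|\geq k$ for every Latin trade $T\subseteq L$. The one structural fact I need is monotonicity: if $D_1\subseteq D_2$ and $D_1$ is $j$-strong, then $D_2$ is $j$-strong. Consequently, if $D$ is $j$-strong and $D\setminus\{x\}$ fails to be $j$-strong for every $x\in D$, then no strict subset of $D$ is $j$-strong. Indeed, any strict subset is contained in some $D\setminus\{x\}$, and a $j$-strong strict subset would make $D\setminus\{x\}$ $j$-strong by monotonicity. So minimality only has to be checked against single-element deletions, in line with the second part of Lemma~\ref{intermed}.

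I first note that $P\neq\emptyset$. Since $n\geq 2$, $L$ has a Latin trade: swapping two rows of $L$ gives a distinct Latin square $L'$, and $L\setminus L'$ is a Latin trade. As $P$ meets this trade at least $k\geq 1$ times, $P$ is nonempty.

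Now fix any $x\in P$ and set $P_0=P\setminus\{x\}$. Every trade $T$ satisfies $|T\cap P_0|\geq |T\cap P|-1\geq k-1$, so $P_0$ is $(k-1)$-strong. Next I prune: while some element $y$ of the current set can be removed with the result still $(k-1)$-strong, remove it. Since $P_0$ is finite, this terminates in a set $Q\subseteq P_0\subsetneq P$. By construction $Q$ is $(k-1)$-strong, and deleting any single element destroys this property. By the monotonicity remark, $Q$ is therefore minimally $(k-1)$-strong. Being a subset of $L$, $Q$ is a PLS.

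The only point needing care is the interpretation at the boundary. When $k-1\geq 1$, a $(k-1)$-strong set meets every trade, so it is a genuine defining set, as the terminology presumes. When $k=1$, the condition ``$0$-strong'' holds for every subset, including $\emptyset$, so the pruning yields $Q=\emptyset$, which is the unique minimally $0$-strong set. It is still a strict subset of $P$ because $P\neq\emptyset$. Beyond this degenerate case and the existence of a trade for $n\geq 2$, the argument is straightforward bookkeeping; I do not expect a real obstacle.
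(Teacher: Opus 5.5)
Your proof is correct and follows essentially the same route as the paper. You delete one entry of $P$ to get a $(k-1)$-strong set, greedily prune it to a minimally $(k-1)$-strong $Q$, and take $Q=\emptyset$ when $k=1$; your monotonicity argument (reducing minimality to single-entry deletions) and your check that $P\neq\emptyset$ make explicit what the paper's pruning step leaves implicit.
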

	
	\begin{proof}
		If $k=1$, then $Q=\emptyset$, since every element of a Latin square belongs to some trade (such as that formed by swapping two rows). 	
		
		Otherwise, for any $(i,j;k)\in P$, by definition, 
		$Q'=P\setminus \{(i,j;k)\}$ is not $k$-strong.
		On the other hand, every trade in $L$ intersects $Q'$ at least $k-1$ times. 
		For each element of $Q'$, recursively remove that element if and only if the remaining PLS is $(k-1)$-strong. 
		The final result of this algorithm is a minimally
		$(k-1)$-strong  PLS
		$Q\subseteq Q'\subset P$.   	
	\end{proof}

	For any $S\subseteq B_n$ and integers $a$ and $b$, we define 
	$$S\oplus (a,b):=
	\{(i+a\Mod{n}, 
	j+b\Mod{n}; k+a+b\Mod{n}): 
	(i,j;k)\in S 
	)\}.$$
	Observe that 
	$S\oplus (a,b)\subseteq B_n$. 
	Let $T_n$ be a Latin trade in $B_n$ of minimum size $d_n$. 
	Since $T_n\oplus (a,b)$ is also a Latin trade in $B_n$ for any integers $a$ and $b$, it follows from Lemma \ref{minny} that the Latin square $B_n$ is itself a minimal 
	$d_n$-strong defining set.   
	
	The main results of this paper are as follows. 
	Let sds$(L,k)$ be the size of the smallest $k$-strong defining set in the Latin square $L$. 
	In Section 2, we prove the following. 
	\begin{theorem}
		If $n$ is even, 
		{\rm sds}$(B_n,k)=kn^2/4$ where $1\leq k\leq 4$. 
		\label{themaintwo}
	\end{theorem}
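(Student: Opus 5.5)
For $n$ even I would prove a matching lower bound and upper bound, both built from the intercalate structure of $B_n$. Write $n=2m$. For each $i,j$, the four cells $(i,j),(i,j+m),(i+m,j),(i+m,j+m)$ form an intercalate in $B_n$, since the symbols are $i+j$ and $i+j+m$, each appearing twice. These $n^2/4$ intercalates $\{(i,j),(i,j+m),(i+m,j),(i+m,j+m)\}$ with $i,j\in\{0,\dots,m-1\}$ partition $B_n$.

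\emph{Lower bound.} A $k$-strong defining set meets each of these $n^2/4$ disjoint intercalates in at least $k$ cells. This forces $|D|\geq kn^2/4$, and it also shows $k\leq 4$ is the only sensible range.

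\emph{Upper bound.} I need a $k$-strong set of size exactly $kn^2/4$, so it must contain exactly $k$ cells from each block.
\begin{itemize}
\item For $k=4$, take $D=B_n$. By Lemma~\ref{minny} it is $4$-strong, because intercalates are the smallest trades.
\item For $k=1$, the known critical sets of size $n^2/4$ in $B_n$ \cite{CDS,DC} supply the construction. I would recall or re-derive one, e.g.\ cells $(i,j)$ with $i+j\leq m-1$ together with a suitable shifted triangle, and check that it hits each block exactly once.
\item For $k=2,3$, the natural candidates are unions of shifted copies $D\oplus(a,b)$, or a direct construction such as $D_k=\{(i,j;i+j): (i+j \bmod n)\in S_k\}$ for a set $S_k$ of symbols.
\end{itemize}
A cleaner guess: let $D$ consist of all cells whose symbol lies in $\{0,\dots,m-1\}$ (the symbol classes mod $m$), adjusted suitably.

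The key steps for verifying $k$-strongness are as follows. Let $T$ be any trade with disjoint mate $T'$. Project onto $\mathbb Z_n/\langle m\rangle\cong\mathbb Z_m$ by reducing rows, columns and symbols mod $m$. The central claim I would try to prove is the following: if $T$ meets $D$ in fewer than $k$ cells, then some row, column or symbol of $T$ has all its cells in a forced configuration, giving a contradiction via a counting argument on the "top-left-most" cell of $T$. This is the standard technique for showing the $\lfloor n^2/4\rfloor$ triangular sets are critical: order cells, take an extremal cell of $T$ outside $D$, and show its row and column partners in $T$ must lie in $D$.

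For $k$-strong I would strengthen this to show that any trade contains at least $k$ cells of $D$. I would do this by exhibiting $k$ distinct extremal cells, taken from different extremal directions (minimal row, maximal row, minimal column, \dots), each of which forces a distinct element of $D\cap T$. Lemma~\ref{intermed} offers an alternative route: show that deleting any $k-1$ entries still leaves a defining set, by completing greedily (each empty cell whose row or column is otherwise determined gets forced).

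The main obstacle is the constructions for $k=2$ and $k=3$, together with the proof that every trade, not just the intercalates, meets them $k$ times. I would first search small cases ($n=4,6$) by hand for patterns with exactly $k$ cells per block. Then I would attempt the extremal-cell argument: for $k=2$ use the minimum and maximum cells of $T$ in a suitable linear order; for $k=3$, take complements, i.e.\ choose $D_3$ as the complement of a set hitting each block once, and argue that any trade has at most $|T|-3$ cells outside $D_3$ using the structure of trades in $B_n$.
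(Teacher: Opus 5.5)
Your lower bound is correct and matches the paper. The $n^2/4$ intercalates $\{(i,j),(i,j+m),(i+m,j),(i+m,j+m)\}$, $0\le i,j\le m-1$, partition $B_n$, and a $k$-strong set meets each of them in at least $k$ cells. Your cases $k=1$ (a known critical set of size $n^2/4$) and $k=4$ (all of $B_n$) are also fine.

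The genuine gap is the upper bound for $k=2,3$. You give neither a construction nor a proof, only candidates and an extremal-cell strategy that is never carried out. Moreover, the concrete candidates you name cannot work. A set $\{(i,j;i+j):\ i+j\in S\}$ (sums mod $n$) meets each block in $0$, $2$ or $4$ cells, because a block carries the two symbols $s$ and $s+m$, each twice. So it can never have exactly $3$ cells per block, and for $k=2$ it forces $|S|=m$. But a set that omits two symbols $a,b$ altogether is not even a defining set: interchanging $a$ and $b$ throughout $B_n$ gives a different Latin square containing it. Hence the symbol-class idea fails for every $n\geq 4$.

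The idea you are missing is that no trade-by-trade analysis is needed. If $C_1,\dots,C_k$ are pairwise disjoint defining sets, every trade meets each $C_i$, so $C_1\cup\dots\cup C_k$ is automatically $k$-strong (Lemma~\ref{DISJ}). Your $k=1$ case already supplies four such disjoint defining sets:
\begin{enumerate}
\item Let $C$ be a defining set of size $n^2/4$. Since it meets every one of the $n^2/4$ blocks, it meets each block in exactly one cell.
\item Each map $S\mapsto S\oplus(a,b)$ is an isotopism that fixes $B_n$ and carries Latin squares to Latin squares, so $C\oplus(a,b)$ is again a defining set.
\item For $(a,b)\in\{0,m\}^2$, these four maps permute the four cells of every block regularly. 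Hence $C$, $C\oplus(0,m)$, $C\oplus(m,0)$ and $C\oplus(m,m)$ partition $B_n$.
\end{enumerate}
The union of any $k$ of these four sets is then a $k$-strong defining set of size $kn^2/4$. This is exactly the paper's argument; it quotes the partition into four critical sets from \cite{ABK}. Your complement idea for $k=3$ is realised by $B_n\setminus (C\oplus(m,m))$. You did list unions of translates $D\oplus(a,b)$ as candidates, but without the disjointness observation you were left facing an unnecessary extremal-cell argument that you never proved.
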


	Next, for each $i\in {\mathbb Z}_n$, we define the {\em diagonal}  $D_i\subset B_n$ to be
	the PLS 
	$$\{(r,r+i;2r+i\Mod{n}): 
	r\in  {\mathbb Z}_n\}.$$
	In Section 3, we prove the following. 
	\begin{theorem}
		The {\rm PLS} given by 
		$P_n:=D_0\cup D_1\cup\dots \cup 
		D_{\lceil (n-3)/2\rceil}$ is
		a minimally $2$-strong defining set of $B_n$ for all $n\geq 2$.  
		\label{themainone} 
	\end{theorem}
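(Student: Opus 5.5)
We verify the two conditions of Lemma~\ref{intermed} with $k=2$. The plan has three steps: show that $P_n$ meets every Latin trade of $B_n$, upgrade this to at least two cells, and then exhibit for each entry of $P_n$ a trade meeting $P_n$ in exactly two cells, that entry among them.

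\textbf{Bookkeeping on a trade.} Index cells by their \emph{offset} $c-r \pmod n$, so $D_i$ is the set of cells of offset $i$. Put $m=\lceil (n-3)/2\rceil$. Then $P_n$ consists of the offsets $0,\dots,m$. The complement consists of the offsets $m+1,\dots,n-1$, which we represent by the integers $-1,-2,\dots,-t$ with $t=n-1-m=\lfloor (n+1)/2\rfloor$.

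Let $T\subseteq B_n$ be a trade with disjoint mate $T'$. Take $(r,c)\in T$ of offset $i$, and let $s'\neq r+c$ be the symbol of $T'$ in that cell. Since $T$ and $T'$ have the same symbols in each row and column, $s'$ occurs in row $r$ of $T$ at column $s'-r$, and in column $c$ of $T$ at row $s'-c$. These two cells of $T$ have offsets $a=s'-2r$ and $b=2c-s'$, so
\[ a+b\equiv 2i \pmod n, \qquad a\neq i\neq b. \]

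\textbf{At least one cell of $P_n$.} Suppose $T$ avoids $P_n$. Choose a cell of $T$ whose offset $i\in\{-1,\dots,-t\}$ is minimal as an integer. Then $a,b\in\{-1,\dots,-t\}$ and $a,b>i$, so $2i<a+b\le -2$. Since $2i\ge -2t\ge -(n+1)$, the open interval $(2i,-2]$ contains no integer congruent to $2i$ modulo $n$ (when $2i=-(n+1)$ its only candidate is $-1>-2$). This contradicts $a+b\equiv 2i$.

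\textbf{At least two cells of $P_n$.} Now suppose $T\cap P_n=\{x\}$. Run the same argument from every cell of $T$ at the minimal complement offset, and also from every cell at the maximal complement offset. In the latter case $a,b<i$, which gives the mirror contradiction whenever both $a$ and $b$ lie in the complement. Each such configuration therefore forces $x$ to be the row-partner or the column-partner of the cell we started from. Row $r$ of $T$ and column $c$ of $T$ each contain at least two cells, so we should be able to find two starting cells whose forced partners are distinct cells. This would contradict $|T\cap P_n|=1$.

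The step I expect to be the main obstacle is this second part. One has to handle the degenerate situations where $T$ has only one cell at the extreme offset, or where every partner chain runs through the same cell $x$. It may be necessary to iterate the reflection $i\mapsto (a,b)$, repeatedly passing to new extreme cells, and possibly to treat $n$ odd and $n$ even separately because the value of $t$ differs.

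\textbf{Minimality.} The map $\oplus(a,a)$ sends $B_n$ to itself, preserves offsets, and maps trades to trades. So for each $i\le m$ it suffices to find one trade $T$ meeting $P_n$ in exactly two cells, at least one of them on $D_i$. For $n$ even, use the intercalate on rows $0,n/2$ and columns $i,\,i+n/2$. Its offsets are $i,\,i+n/2,\,i-n/2\equiv i+n/2,\,i$. Since $0\le i\le n/2-1$, the offset $i+n/2$ lies outside $P_n$, so $T$ meets $P_n$ exactly in its two cells on $D_i$. For $n$ odd, $B_n$ has no intercalates. I would instead try a small trade obtained from the cycle structure of two rows, for instance rows $0$ and $1$ with $T'$ obtained by swapping the symbols in those two rows on a suitable set of columns. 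The aim is to choose it so that exactly two cells, one on $D_i$, have offset in $\{0,\dots,m\}$, using the same integer-interval bookkeeping as above.
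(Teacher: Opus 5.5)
There is a genuine gap, in two places. Your offset identity $a+b\equiv 2i \pmod n$ and the extreme-offset argument are correct, and they give a tidy alternative proof that every trade meets $P_n$. Your intercalate for even $n$ is exactly the one used in Lemma~\ref{2exist}.

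\textbf{The bound $|T\cap P_n|\ge 2$.} This is the heart of the theorem, and your sketch does not reach a contradiction. Suppose $T\cap P_n=\{x\}$ with $x=(r_0,c_0;s_0)$.
A cell of $T$ has $x$ as its row-partner only if it lies in row $r_0$ and carries $s_0$ in $T'$. There is exactly one such cell, and likewise exactly one cell has $x$ as its column-partner.
So your extreme-offset argument shows only this: the minimal and maximal complement offsets of $T$ are each attained by a single cell, namely these two cells pointing at $x$. (If the two extremes coincided you would get $|T|\le 3$.) That is not a contradiction. There are also no ``distinct forced partners'' to look for, since the forced partner is always $x$.
Every other complement cell then satisfies $a+b=2i$ exactly, with $i$ strictly between $a$ and $b$. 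These relations are consistent as far as they go. Let $R,K$ be the row- and column-partner bijections of $T$ and $g$ the integer offset, and sum $g(R(y))+g(K(y))-2g(y)$ over $y\in T$. The terms are $-n$ at $x$, $0$ at one and $n$ at the other of the two cells pointing at $x$, and $0$ elsewhere, so the total is $0$ with no contradiction.
You offer no mechanism for going further, and this is where all the work lies.

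The idea you are missing is the one the paper uses. By Lemma~\ref{intermed} it suffices to show that $P_n\setminus\{x\}$ is a defining set for every $x$. The paper exhibits a defining set $Q_n=\{(i,j;i+j): 1\le i\le j\le i+\lfloor (n-3)/2\rfloor\}\subseteq P_n$ that avoids row $0$, proved by an explicit forced-completion order. It then translates $Q_n$ by $\oplus(r,r)$, which fixes $P_n$, off the row $r$ containing $x$.

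\textbf{Minimality for odd $n$.} This is also left open, and the route you suggest fails in general. Take a trade of $B_n$ confined to rows $r$ and $r+d$. Each column used must contain two cells of $T$, since a lone cell would force $T'$ to repeat its symbol, and $T'$ swaps the two symbols. Row-closure then forces the set of columns to be a union of cosets of the subgroup generated by $d$ in $\mathbb{Z}_n$.
For prime $n$ this means two complete rows, which meet $P_n$ in $n-1>2$ cells once $n\ge 5$. The required trades have to spread over many rows.
The paper builds them from good tessellations of $\mathcal{E}_n$ via Theorem~\ref{tessa} (Lemmas~\ref{doubletool} and~\ref{tripletool}). It first uses the extra symmetry $P_n^T\oplus((n+3)/2,0)=P_n$ to restrict the diagonal index $j$ to $\lceil (n-3)/4\rceil\le j\le (n-3)/2$. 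It then splits into the cases $j<(n-3)/3$, $j=(n-3)/3$ and $j>(n-3)/3$.
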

	See Figure \ref{firstegg} for an example of $P_{11}$. 
	In Section 3 we give computational results for small orders. Finally in Section 4, we describe 
	$k$-strong defining sets in $B_n$ for larger values of $k$.
	
	\begin{figure}
		$$\begin{array}{|c|c|c|c|c|c|c|c|c|c|c|}
			\hline
			\mk 0 & \mk 1 & \mk 2 & \mk 3 & \mk 4 & 5 & 
			6 & 7 & 8 & 9 & 10 \\
			\hline
			1 & \mk 2 & \mk 3 & \mk 4 & \mk 5 & 
			\mk 6 & 7 & 8 & 9 & 10 & 0 \\
			\hline
			2 & 3 & \mk 4 & \mk 5 & 
			\mk 6 & \mk 7 & \mk 8 & 9 & 10 & 0 & 1 \\
			\hline
			3 & 4 & 5 & 
			\mk 6 & \mk 7 & \mk 8 & \mk 9 & \mk 10 & 0 & 1 & 2\\
			\hline
			4 & 5 & 
			6 & 7 & \mk 8 & \mk 9 & \mk 10 & \mk 0 & \mk 1 & 2 & 3 \\
			\hline
			5 & 
			6 & 7 & 8 & 9 & \mk 10 & \mk 0 & \mk 1 & \mk 2 & \mk 3 & 4 \\
			\hline
			6 & 7 & 8 & 9 & 10 & 0 & \mk 1 & \mk 2 & \mk 3 & \mk 4 & \mk 5  \\
			\hline
			\mk 7 & 8 & 9 & 10 & 0 & 1 & 2 & \mk 3 & \mk 4 & \mk 5 & \mk 6  \\
			\hline
			\mk 8 & \mk 9 & 10 & 0 & 1 & 2 & 3 & 4 & \mk 5 & \mk 6 & \mk 7 \\
			\hline
			\mk 9 & \mk 10 & \mk  0 & 1 & 2 & 3 & 4 & 5 & 6 & \mk 7 & \mk 8 \\
			\hline
			\mk 10 & \mk  0 & \mk 1 & \mk 2 & 3 & 4 & 5 & 6 &  7 &  8 & \mk 9 \\
			\hline
		\end{array}$$
		\caption{The PLS $P_{11}$}
		\label{firstegg}
	\end{figure}

	\section{Disjoint critical sets in Latin squares and the even case}
	
	In \cite{ABK}, the question of whether a Latin square can be partitioned into disjoint critical sets is studied. 
	This is of interest to our problem, because:
	\begin{lemma}
		If $C_1,\dots, 
		C_m$ are pairwise disjoint defining sets in a Latin square $L$, then  
		$C_1\cup C_2 \cup \dots \cup  C_m$
		is an $m$-strong defining set in $L$.
		\label{DISJ}
	\end{lemma}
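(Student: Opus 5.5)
The plan is to argue directly from the definition of $k$-strong, using the fact recalled in the introduction that any defining set of $L$ meets every Latin trade in $L$. Set $C=C_1\cup C_2\cup\dots\cup C_m$. We must show two things: that $C$ is a defining set of $L$, and that $|C\cap T|\geq m$ for every Latin trade $T\subseteq L$. The first is immediate, because $C\supseteq C_1$ and any superset (inside $L$) of a defining set is again a defining set. Indeed, if $C\subseteq L'$ for some Latin square $L'$, then $C_1\subseteq L'$, which forces $L'=L$.

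For the intersection bound, fix a Latin trade $T\subseteq L$ and let $T'$ be a disjoint mate of $T$. For each $i$, the set $C_i\cap T$ must be non-empty. Otherwise $C_i\subseteq L\setminus T\subseteq (L\setminus T)\cup T'$, and this is a Latin square different from $L$ (since $T\neq\emptyset$ and $T\cap T'=\emptyset$). That would contradict $C_i$ being a defining set.

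Because the $C_i$ are pairwise disjoint, the sets $C_i\cap T$ for $1\leq i\leq m$ are pairwise disjoint non-empty subsets of $C\cap T$. Hence
$$|C\cap T|=\sum_{i=1}^m |C_i\cap T|\geq m.$$
Since $T$ was arbitrary, $C$ is an $m$-strong defining set. One could equally phrase the argument through Lemma \ref{intermed}: removing any fewer than $m$ entries from $C$ leaves at least one $C_i$ untouched, so what remains is still a defining set.

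There is no real obstacle here. The only point needing care is to check that $(L\setminus T)\cup T'$ is a Latin square distinct from $L$, and this follows from properties (a)–(d) of a disjoint mate.
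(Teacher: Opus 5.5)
Your proof is correct and follows essentially the same approach as the paper: every defining set $C_i$ must meet every Latin trade $T$, and pairwise disjointness of the $C_i$ then gives $|T\cap(C_1\cup\dots\cup C_m)|\geq m$. You also spell out two details the paper leaves implicit, namely that the union is itself a defining set and that $(L\setminus T)\cup T'$ is a Latin square distinct from $L$.
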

	
	\begin{proof}
		Let $C_1,\dots, 
		C_m$ be pairwise disjoint defining sets in a Latin square $L$. 
		As observed in the Introduction, any trade $T$ in $L$ must intersect each of the defining sets. Thus, $C_1\cup C_2 \cup \dots \cup  C_m$
		is an $m$-strong defining set in $L$.
	\end{proof}
	
	In \cite{ABK}, it is shown, in particular, that $B_n$ can be partitioned into $4$ disjoint critical sets for any integer $n$. 
	For $n$ even, this result allows us to determine sds$(B_n,k)$ exactly, as we now show.

	\begin{theorem} {\rm \cite{ABK}} 
		For any even $n\geq 2$, $B_n$ partitions into the following $4$ critical sets, each of size $n^2/4$: 
		$$C_1:= \{(i,j;i+j) : 0\leq i,j;\ i+j\leq  n/2-1\}
		\cup  \{(i, j; i + j) : n/2+1 \leq i+j;\ i,j\leq n-1\};$$
		$$C_2:=C_1\oplus (0,n/2); 
		C_3:=C_1\oplus (n/2,0); C_4:=C_1\oplus (n/2,n/2).$$ 
	\end{theorem}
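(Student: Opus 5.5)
The plan is to prove the three parts of the statement in turn: the four sets partition $B_n$, each has size $n^2/4$, and each is a critical set. I will do all the real work for $C_1$ and transfer it to $C_2,C_3,C_4$ using the maps $S\mapsto S\oplus(a,b)$. Each such map sends $B_n$ to itself and is a bijection on its entries. It therefore sends Latin trades to Latin trades (the disjoint mate is sent along with the trade), and sends defining and critical sets of $B_n$ to defining and critical sets of $B_n$. So once $C_1$ is shown to be critical of size $n^2/4$, the same holds for $C_2,C_3,C_4$.

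\emph{Partition and size.} Let $H=\{(0,0),(0,n/2),(n/2,0),(n/2,n/2)\}$. This is a subgroup of ${\mathbb Z}_n^2$, and its cosets split the $n^2$ cells into $n^2/4$ classes of size $4$. The sets $C_1\oplus h$, for $h\in H$, partition $B_n$ exactly when $C_1$ contains exactly one cell from each coset of $H$. This also gives $|C_1|=n^2/4$. To check the condition, I would fix a cell and reduce both coordinates modulo $n/2$, writing $(i,j)=(i_0+\epsilon n/2,\ j_0+\delta n/2)$ with $0\le i_0,j_0<n/2$ and $\epsilon,\delta\in\{0,1\}$. I would then run through the four choices of $(\epsilon,\delta)$ against the two defining inequalities of $C_1$. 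The intended picture is that the two triangles of $C_1$, read modulo $H$, tile the $n/2\times n/2$ fundamental block exactly once: the triangle $i+j\le n/2-1$ covers the pairs $(i_0,j_0)$ with $i_0+j_0\le n/2-1$, and the other triangle covers the complementary pairs. This is a routine inequality check on the integer values of $i+j$.

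\emph{Defining set.} I would use the standard forced-completion argument. Order the empty cells of $C_1$ by the integer value of $i+j$, moving outward from the boundary of each triangle. At each step I would show that the symbol in the next cell is forced: either every other symbol already appears in its row or column, or the symbol can appear in only one remaining cell of its row. Repeating this fills the whole square and shows that every completion equals $B_n$.

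\emph{Minimality.} By Lemma \ref{intermed} with $k=1$, it suffices to find, for each entry $(i,j;i+j)\in C_1$, a Latin trade $T\subseteq B_n$ with $T\cap C_1=\{(i,j;i+j)\}$. My first attempt would use the trades in $B_n$ given by a rectangle of the form $\{(r,c),(r,c+s),(r+s,c),(r+s,c+s)\}$ in which shifted copies of symbols are cycled. Concretely, I would try the row-cycle trades from the literature on $B_n$, where a set of cells $\{(i+t,\,j-t)\}$ along an anti-diagonal together with a partner anti-diagonal forms a trade. I would choose the partner anti-diagonal so that it lies in the empty region of $C_1$, just past the hypotenuse of the triangle, and the two anti-diagonals are joined only at the cell $(i,j)$.

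I expect this step, exhibiting for every entry a trade that meets $C_1$ only there, to be the main obstacle. The entries on the hypotenuses of the two triangles are straightforward. Entries deep inside the triangles need trades that leave the triangle immediately and avoid the second triangle as they wrap around modulo $n$. I would handle these by a case split according to which triangle contains $(i,j)$, and within each case by the distance of $(i,j)$ from the hypotenuse.
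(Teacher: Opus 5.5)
The paper gives no proof of this theorem; it is quoted from \cite{ABK}. I therefore judge your proposal on its own.

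\textbf{Minimality: the proposed trades fail, but the fix is immediate.} Your reduction to $C_1$ via the maps $\oplus(a,b)$ is sound, and so is your coset formulation of the partition. The step you call the main obstacle, however, rests on trades that fail as described.
\begin{itemize}
\item In $B_n$ the cells $\{(r,c),(r,c+s),(r+s,c),(r+s,c+s)\}$ carry the symbols $r+c,\ r+c+s,\ r+c+s,\ r+c+2s$. They form a trade only when $2s\equiv 0 \pmod n$, i.e.\ $s=n/2$.
\item Suppose a trade has all its cells on two anti-diagonals (symbol classes $\sigma_1,\sigma_2$). No row or column can contain just one of its cells. So if $(x,\sigma_1-x)$ is in the trade, column $\sigma_1-x$ must also contain $(x+\sigma_2-\sigma_1,\sigma_1-x)$. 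The row set is therefore closed under $x\mapsto x+(\sigma_2-\sigma_1)$, and the trade wraps around ${\mathbb Z}_n$. It cannot be two segments ``joined only at $(i,j)$''.
\end{itemize}
The missing idea is that the case $s=n/2$ already does everything. The $H$-coset $\{(i,j),(i+n/2,j),(i,j+n/2),(i+n/2,j+n/2)\}$ of any cell is an intercalate of $B_n$. Your partition step says exactly that $C_1$ contains one cell of each $H$-coset. Hence for each $(i,j;i+j)\in C_1$ this intercalate meets $C_1$ only in that entry. Lemma~\ref{intermed} with $k=1$ then gives minimality at once, with no case split by triangle or by distance from the hypotenuse. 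This is the observation the paper makes right after the theorem to obtain the lower bound $kn^2/4$.

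\textbf{The defining-set claim is only asserted.} This is where the real content lies. ``Order by $i+j$ and check that each cell is forced'' is a plan, not an argument. $C_1$ is the classical critical set of size $n^2/4$ from \cite{CDS,DC}. You must either cite that result (or \cite{ABK}), or write out an explicit forcing sequence and verify each step, as the paper does for $Q_n$. Several steps in the $i+j$ order are forced only in the weak sense (a symbol has a unique available cell in a row or column), so this needs care.

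\textbf{The partition check fails for the set as printed.} Your ``routine inequality check'' does not go through with the condition $n/2+1\le i+j$.
\begin{itemize}
\item For $n\ge 4$ the second part alone then has more than $n^2/4$ cells.
\item The cells $(0,1)$ and $(n/2,1)$ lie in the same $H$-coset, yet both belong to $C_1$.
\end{itemize}
The set your picture describes has second part $\{(i,j;i+j): i+j\ge 3n/2\}$, equivalently $n/2+1\le i\le n-1$ and $3n/2-i\le j\le n-1$. Your coset argument works for that set, and you should state this correction explicitly.
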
	 
	
	By Lemma \ref{DISJ}, 
	for any  $1\leq k\leq 4$,  $C_1\cup \dots\cup C_k$ is a $k$-strong defining set of $B_n$. 
	Now, define $I_{i,j}$ to be the intercalate in 
	$B_n$ on cells $(i,j)$, $(i+n/2,j)$, $(i,j+n/2)$ and  
	$(i+n/2,j+n/2)$. 
	Since a Latin trade must intersect any 
	critical set, 
	each intercalate of this form intersects each of $C_1$, $C_2$, $C_3$ and $C_4$ exactly once. 
	Moreover, 
	the set of intercalates 
	${\mathcal I}:=\{I_{i,j} \mid 0\leq i,j\leq n/2-1\}$
	partition $B_n$.   Thus, 
	a minimally $k$-strong defining set in $B_n$, for $n$ even cannot have size less than $kn^2/4$, for each $1\leq k\leq 4$. 
	This implies Theorem \ref{themaintwo}.

	\section{Tessellations of triangles and Latin trades in $B_n$}
	
	In what follows, 
	a triangle in the Euclidean plane is said to be {\em good} if it is right-angled with integer coordinates and a hypotenuse of gradient $-1$  (thus isoceles). 
	A tessellation of any finite region in the Euclidean plane into good triangles is also said to be {\em good}
	if no point is the vertex of more than $3$ of these triangles. 
	We also define 
	${\mathcal E}_n$ to be the good triangle in the Euclidean plane with vertices  
	$(0,0)$, $(0,n)$ and $(n,0)$.
	
	\begin{theorem} {\rm \cite{Dr91}} 
		Let $S$ be a good tessellation of  
		${\mathcal E}_n$ (with more than one triangle) and let $V$ be the set of coordinates of all vertices of the triangles in $S$.  
		Then $$T:=\{(i,j): (i,j)\in S\}\setminus \{(0,n),(n,0)\}$$ 
		is a Latin trade in $B_n$. 
		\label{tessa}
	\end{theorem}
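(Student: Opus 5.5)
The statement should be read with $V$ in place of $S$ and with each cell carrying its symbol from $B_n$:
$$T=\{(i,j;\,i+j \Mod{n}) : (i,j)\in V\setminus\{(0,n),(n,0)\}\}.$$
The plan is to write down an explicit disjoint mate $T'$ and check conditions (a)--(d) from the definition of a Latin trade.

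\textbf{Step 1: $T$ is a partial Latin square contained in $B_n$.} Every point of $V$ has nonnegative integer coordinates with $i+j\le n$. The only points with a coordinate equal to $n$ are $(0,n)$ and $(n,0)$, which are removed. So distinct points of $V\setminus\{(0,n),(n,0)\}$ give distinct cells of ${\mathbb Z}_n\times{\mathbb Z}_n$, and $T\subseteq B_n$ automatically. The symbol of a point depends only on which diagonal line $x+y=s$ it lies on: the symbol is $s \Mod{n}$, and the two lines $s=0$ and $s=n$ both give symbol $0$.

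\textbf{Step 2: local structure at a vertex.} Each good triangle has three sides: one horizontal, one vertical, and one of gradient $-1$. Since at most $3$ triangles meet at any vertex and the angles are $\pi/2,\pi/4,\pi/4$, I will classify the possible configurations around a vertex $v$.

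For an interior vertex, the angles must sum to $2\pi$ using at most three triangles. This forces the three triangles to have angles $\pi/2,\pi/2,\pi$; that is, $v$ lies in the interior of a side of one triangle and is the right-angle vertex of two others, or a similar arrangement with the roles of the angles permuted. In every case, exactly one horizontal line, one vertical line and one diagonal line of the tessellation pass through $v$, and each of them continues past $v$ on exactly one side. Boundary vertices are treated the same way: a vertex on an edge of ${\mathcal E}_n$ lies on that edge's line and on one further line of the tessellation.

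Consequently, on each horizontal line, each vertical line and each diagonal line, the vertices of $V$ lying on it form a chain of consecutive segments of the tessellation.

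\textbf{Step 3: the mate.} The natural candidate follows from the fact that, in each triangle with vertices $(a,b),(a+c,b),(a,b+c)$ (or the reflected orientation), the two acute vertices share a symbol. I would define $T'$ by a cyclic shift along the vertical chains. For a vertex $v=(i,j)$, let $T'$ give cell $(i,j)$ the symbol $i+j'$, where $(i,j')$ is the next vertex of $V$ on the line $x=i$ in a fixed direction, with wrap-around at the boundary of ${\mathcal E}_n$ handled by the excluded corners.

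Under this definition the symbol set of each row is, by construction, a cyclic rearrangement of the symbols of $T$ in that row, so condition (c) holds. Condition (d) for columns requires showing that the same assignment can also be read as $i'+j$ for the neighbouring vertex $(i',j)$ on the horizontal line. This should follow from the good triangle identity: when two triangles meet along a vertical segment from $(i,j)$ to $(i,j')$, the third vertex $(i',j)$ satisfies $i'+j=i+j'$, because the hypotenuse has gradient $-1$. Disjointness (a) holds because $j'\neq j$ and, since $|j'-j|<n$, the shift is nonzero mod $n$, except in the $s=0$ versus $s=n$ wrap-around case, which needs separate checking. Condition (b) is immediate from the definition.

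\textbf{Main obstacle.} The hard part is Step 3: making the direction choice consistent so that the vertical-shift and horizontal-shift descriptions of $T'$ agree at every vertex, and handling the boundary. The identification of the lines $x+y=0$ and $x+y=n$ mod $n$ is exactly what compensates for deleting the corners $(0,n)$ and $(n,0)$.

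I would first check the approach on the smallest nontrivial tessellations, which give the intercalate-like trades. I would then run an induction on the number of triangles: subdivide one good triangle into four, or split off a corner triangle, and verify that the mate changes only locally, using the fact that the degree bound of $3$ is preserved.
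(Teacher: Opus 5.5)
There is a genuine gap, at the point you flag as the main obstacle. The mate in Step 3 (give $(i,j)$ the symbol of the next vertex on $x=i$ in a fixed direction) is not a disjoint mate in general, whichever direction you fix.

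Take $n=3$. Tessellate the strip $0\le x\le 1$ of ${\mathcal E}_3$ into five good triangles with legs of length $1$, with right angles at $(0,0),(1,1),(0,1),(1,2),(0,2)$. Add the triangle on $(1,0),(3,0),(1,2)$. No point is a vertex of more than $3$ triangles. Then $T$ is all of rows $0$ and $1$ of $B_3$. Each column of $T$ has two cells, so any disjoint mate must swap them: $T'(0,c)=c+1$ and $T'(1,c)=c$. Thus row $0$ is shifted upwards but row $1$ downwards. Your upward rule gives $T'(1,0)=2$, and your downward rule gives $T'(0,0)\in\{0,2\}$; both violate column $0$. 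The identity $i'+j=i+j'$ you invoke for (d) holds only when $(i,j)$--$(i,j')$ is the vertical leg of a triangle with right angle at $(i,j)$. Such legs can point either way and can contain further vertices; here $(1,1)$ lies inside the leg from $(1,0)$ to $(1,2)$.

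The source of the trouble is Step 2, whose local picture is wrong. The configuration $\pi/2,\pi/2,\pi$ is impossible: two right angles at $v$ fill opposite quadrants, leaving no half-plane for the third triangle. The correct picture is as follows.
\begin{itemize}
\item The horizontal, vertical and gradient $-1$ lines through $v$ cut out six sectors of angles $\pi/2,\pi/4,\pi/4,\pi/2,\pi/4,\pi/4$, and each corner of a good triangle fills exactly one sector.
\item At an interior vertex, one triangle has $v$ inside a side, and exactly three have a corner at $v$, filling three \emph{consecutive} sectors. At $(1,1)$ above the angles are $\pi,\pi/2,\pi/4,\pi/4$.
\item The same holds at non-corner boundary points, while $(0,0),(0,n),(n,0)$ each see a single sector.
\end{itemize}
This also refutes your ``chain of consecutive segments'' claim.

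The missing observation, on which the paper's proof rests, is that opposite sectors play the same role. The $\pi/2$ pair makes $v$ a right-angle vertex. One $\pi/4$ pair makes $v$ the far end of a vertical leg, and the other makes it the far end of a horizontal leg. Three consecutive sectors contain exactly one sector of each pair. Hence every vertex other than $(0,n),(n,0)$ is the right-angle vertex of exactly one triangle, with legs to $(i+k,j)$ and $(i,j+k)$, where $0<|k|<n$. You should set $T'(i,j)=i+j+k\Mod{n}$. Then:
\begin{itemize}
\item $(i,j)\mapsto(i,j+k)$ permutes the vertices on each line $x=i$. For $i=0$ it is a bijection from the vertices other than $(0,n)$ onto those other than $(0,0)$, and these two points both carry symbol $0$. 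This gives (c).
\item $(i,j)\mapsto(i+k,j)$ does the same on horizontal lines, giving (d).
\item $k\not\equiv 0$ gives (a).
\end{itemize}
No induction is needed.
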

	
	\begin{proof}
		This result is proved in \cite{Dr91}, but we add a short explanation here to be helpful. 
		Consider a triangle $\Delta$ in the tessellation with 
		vertex $(i,j)$ at the right angle. 
		Then $(i,j+k)$ and $(i+k,j)$ are the other vertices of 
		$\Delta$, where 
		$k$ is some non-zero integer. 
		We place $i+j+k\Mod{n}$ in the cell $(i,j)$ of the 
		disjoint mate of $T'$. 
		Since any point is the vertex of at most $3$ triangles in the tessellation, it follows that $(i,j)$ is not the point at the right-angle of any other triangle in the tessellation. Thus $T'$ is well-defined.  
		Moreover, since $k\neq 0$, $T$ and $T'$ are disjoint. 
		By similar reasoning, it can also be shown that 
		$T'$ and $T$ contain the same set of symbols in each row and column. 
	\end{proof}
	
	An example of the previous theorem can be seen in Figure \ref{trii}. Note that $(x,y)$-coordinates in the plane become visually transposed in the Latin square. That is, 
	the row and column are, visually, the $-y$ and $x$ axes, if we consider the origin to be the top-left corner of the Latin square.  This is a classic example of the confusion when switching between the Euclidean plane and an array.  
	The triangle in Figure \ref{trii} has thus been reflected on the $x$-axis so that the trade in $B_{11}$ can be better visualized. 
	The triangles in the tessellation of ${\mathcal E}_{11}$ 
	are given by:
	$$\begin{array}{l}
		\{(0,0),(0,4),(4,0)\}, \{(4,4),(0,4),(4,0)\},
		\{(0,4),(1,4),(0,5)\},  \{(0,4),(1,4),(1,5)\}, \\ 
		\{(0,5),(1,5),(0,6)\},  \{(0,5),(1,5),(1,6)\}, 
		\{(0,6),(1,6),(0,7)\},  \{(0,6),(1,6),(1,7)\}, \\
		\{(1,4),(4,4),(1,7)\},  \{(4,7),(4,4),(1,7)\},
		\{(0,7),(0,11),(4,7)\}, \{(4,0),(4,7),(11,0)\}.
	\end{array}$$ 
	
	\begin{figure}
		\begin{center}
			\begin{tikzpicture}[scale=0.6]
				\draw (0,0)--(0,11)--(11,11)--
				(0,0);
				\draw (0,7)--(7,7)--(7,11)--(6,10)--(6,11)--(5,10)--(5,11)--(4,10);
				\draw (0,7)--(4,11)--(4,7)--(7,10)--(4,10);
				
			\end{tikzpicture}
			\quad $\begin{array}{|c|c|c|c|c|c|c|c|c|c|c|}
				\hline
				{\bf 0_4} & 1 & 2 & 3 &  {\bf 4_5} & {\bf 5_6} & 
				{\bf 6_7} & {\bf 7_0} & 8 & 9 & 10 \\
				\hline
				1 &  2 & 3 &  4 & {\bf 5_8} & 
				{\bf 6_5} & {\bf 7_6} & {\bf 8_7} & 9 & 10 & 0 \\
				\hline
				2 & 3 & 4 & 5 & 
				6 & 7 & 8 & 9 & 10 & 0 & 1 \\
				\hline
				3 & 4 & 5 & 6 & 7 & 8 & 9 & 10 & 0 & 1 & 2\\
				\hline
				{\bf 4_0} & 5 & 
				6 & 7 & {\bf 8_4} & 9 &  10 & {\bf 0_8} & 1 & 2 & 3 \\
				\hline
				5 & 
				6 & 7 & 8 & 9 &  10 &  0 & 1 & 2 &  3 & 4 \\
				\hline
				6 & 7 & 8 & 9 & 10 & 0 & 1 & 2 & 3 & 4 & 5  \\
				\hline
				7 & 8 & 9 & 10 & 0 & 1 & 2 & 3 &  4 & 5 & 6  \\
				\hline
				8 &  9 & 10 & 0 & 1 & 2 & 3 & 4 &  5 &  6 & 7 \\
				\hline
				9 &  10 & 0 & 1 & 2 & 3 & 4 & 5 & 6 & 7 & 8 \\
				\hline
				10 &  0 & 1 & 2 & 3 & 4 & 5 & 6 &  7 &  8 & 9 \\
				\hline
			\end{array}$

			\caption{A good tessellation of ${\mathcal E}_{11}$  (reflected on the $x$-axis) and the corresponding Latin trade in $B_{11}$}
		\end{center} 
		\label{trii}
	\end{figure}

	The following lemma follows by recursively tessellating into maximal squares, which each tessellate into pairs of isosceles right-angled triangles.
	
	\begin{lemma}
		There is a good tessellation of any rectangle in the Euclidean plane with integer coordinates. 	
		\label{recta}
	\end{lemma}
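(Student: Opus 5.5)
We prove Lemma~\ref{recta} by strong induction on the area $ab$ of an $a\times b$ rectangle $R$ with integer corners, with $a,b\geq 1$. We build the tessellation by cutting off maximal squares, as the sentence before the statement suggests, and then verify the vertex condition.

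\emph{Squares.} An $m\times m$ square with integer corners, say $[x,x+m]\times[y,y+m]$, is cut along its anti-diagonal from $(x,y+m)$ to $(x+m,y)$. This diagonal has gradient $-1$. The two pieces are the good triangles with right angles at $(x,y)$ and at $(x+m,y+m)$. Each lower corner $(x,y)$ and upper corner $(x+m,y+m)$ is a vertex of one triangle, and the two anti-diagonal corners are vertices of two triangles.

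\emph{Reduction.} If $a=b$ we are done. Otherwise assume $a<b$, with $R=[0,a]\times[0,b]$ after translation; the case $b<a$ is symmetric. Cut off the square $[0,a]\times[0,a]$. What remains is the rectangle $[0,a]\times[a,b]$, which has integer corners and smaller area, so by induction it has a good tessellation. This is the Euclidean algorithm on $(a,b)$. The recursion ends with a finite union of squares, each split into two good triangles.

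\emph{Vertex condition.} This is the only real obstacle: we must check that no point lies on more than $3$ triangles. I would strengthen the induction hypothesis to state that the tessellation of a rectangle uses only the corners of its squares as vertices, and that each corner of the rectangle itself is a vertex of at most $2$ triangles.

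First suppose $b\geq 2a$. The cut-off square and the remaining rectangle meet along the segment $[0,a]\times\{a\}$. The only triangle vertices on that line are $(0,a)$ and $(a,a)$.
\begin{itemize}
\item The point $(0,a)$ is an anti-diagonal corner of the square, so it lies on $2$ triangles of the square. It is a corner of the remaining rectangle, where it lies on at most $2$ triangles, giving at most $4$. This is too many, so the orientation of the cut must be chosen to help.
\item The fix is to choose, in each square, which diagonal direction of the pair is used. The hypotenuse must have gradient $-1$, so the only freedom is to reflect the whole construction, that is, to cut squares off the opposite end of the rectangle.
\end{itemize}

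Instead, I would check the points directly. A point on the shared edge is a vertex of triangles from the square side and from the rectangle side. On the square side, $(0,a)$ lies on $2$ triangles and $(a,a)$ lies on $1$. On the rectangle side I track which corners are right-angle corners: the lower-left corner of a rectangle is a lower corner of its first square, so it lies on exactly $1$ triangle. The totals are then $2+1=3$ at $(0,a)$, and at $(a,a)$ at most $1+2=3$. Points on the interior of a side of a square are not vertices of that square's triangles, so they are fine.

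Hence the strengthened hypothesis to carry through the induction is: the lower-left and upper-right corners of the rectangle each lie on exactly $1$ triangle, and the other two corners each lie on at most $2$. This is preserved at every cut, including the case $a<b<2a$, where the remainder is wider than tall. Verifying this invariant carefully in both orientations is the main work. The remaining conditions, namely integer coordinates, right angles, gradient $-1$, and the union being $R$, are immediate.
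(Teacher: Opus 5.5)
Your proposal is correct and uses the same construction as the paper: repeatedly cut off a maximal square, Euclidean-algorithm style, and split each square along its gradient $-1$ diagonal. The paper leaves the vertex condition implicit, whereas your final invariant (lower-left and upper-right corners of each rectangle on exactly one triangle, the other two corners on at most two, which holds automatically because a corner of the rectangle is the same kind of corner of the square containing it) correctly gives the counts $2+1$ and $1+2$ at the two ends of each cut, and interior points of a cut only receive triangles from the remainder. The first ``at most $2$'' hypothesis and the ``reflect the construction'' remark are dead ends and should simply be removed from the write-up.
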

	
	\begin{lemma}
		Let $2\leq 2m< n< 3m$.
		Then there is a Latin trade $T_{m,n}$ in $B_n$ such that:
		$$T_0:=\{(0,0;0),(m,0;m), (m,m;2m), (m,n-m;0)\}\subseteq T$$
		and if $(r,c;r+c)\in T_{m,n}\setminus T_0$, then
		$0\leq r\leq 3m-n$ and 
		$m\leq c\leq n-m$. 
		\label{doubletool}
	\end{lemma}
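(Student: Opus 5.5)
The plan is to build an explicit good tessellation $S$ of ${\mathcal E}_n$ and then apply Theorem~\ref{tessa}. The tessellation is chosen so that $(0,0)$, $(m,0)$, $(m,m)$, $(m,n-m)$ are vertices, and every other vertex apart from $(0,n)$ and $(n,0)$ lies in the box $[0,3m-n]\times[m,n-m]$. Put $h:=n-2m$. Since $2m<n<3m$, we have $0<h<m$ and $3m-n=m-h>0$.

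The tessellation consists of the following pieces.
\begin{itemize}
\item $A$, with vertices $(0,0),(m,0),(0,m)$: right angle at $(0,0)$.
\item $B$, with vertices $(m,m),(m,0),(0,m)$: right angle at $(m,m)$, hypotenuse on $x+y=m$.
\item $C$, with vertices $(m,0),(n,0),(m,n-m)$: right angle at $(m,0)$, legs of length $n-m$, hypotenuse on $x+y=n$.
\item $E$, with vertices $(0,n-m),(m,n-m),(0,n)$: right angle at $(0,n-m)$, legs of length $m$.
\item The rectangle $[0,m]\times[m,n-m]$, of width $m$ and height $h$.
\end{itemize}
These pieces fill ${\mathcal E}_n$. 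Above $x+y=m$ the region left after $A$ and $B$ is the trapezoid $0\le x\le m$, $m\le y\le n-x$, and $C$, $E$ and the rectangle exactly fill what remains of ${\mathcal E}_n$.

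I tessellate the rectangle in two parts.
\begin{itemize}
\item The square $[m-h,m]\times[m,n-m]$ is split by its diagonal of gradient $-1$, which joins $(m-h,n-m)$ to $(m,m)$.
\item The rectangle $[0,m-h]\times[m,n-m]$ is given a good tessellation by Lemma~\ref{recta}.
\end{itemize}
Each square in that lemma is split by its gradient $-1$ diagonal. Hence a lower-left or upper-right corner of a square lies in one triangle, and a lower-right or upper-left corner lies in two.

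The vertices of $S$ are then $(0,0),(m,0),(0,m),(m,m),(m,n-m),(n,0),(0,n),(0,n-m)$, the points $(m-h,m)$ and $(m-h,n-m)$, and the vertices of the Lemma~\ref{recta} tessellation. Every vertex other than $(0,0),(m,0),(m,m),(m,n-m),(0,n),(n,0)$ has first coordinate in $[0,m-h]=[0,3m-n]$ and second coordinate in $[m,n-m]$. Deleting $(0,n)$ and $(n,0)$ therefore gives exactly the required containment of $T_0$ and the required confinement of the remaining cells, with symbol $r+c$ in cell $(r,c)$.

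The main obstacle is checking that $S$ is good, i.e.\ that no point is a vertex of more than $3$ triangles. I will do this by counting at each junction.
\begin{itemize}
\item $(m,0)$ lies in $A$, $B$ and $C$.
\item $(0,m)$ lies in $A$, $B$, and in one triangle as the lower-left corner of the Lemma~\ref{recta} tessellation.
\item $(m,m)$ lies in $B$ and in both triangles of the square.
\item $(m,n-m)$ lies in $C$, $E$, and in one triangle of the square.
\item $(0,n-m)$ lies in $E$ and in two triangles as the upper-left corner of the rectangle.
\item $(m-h,m)$ lies in $2+1$ triangles and $(m-h,n-m)$ lies in $1+2$ triangles.
\end{itemize}
Two further kinds of point need care.
\begin{itemize}
\item Points on the lines $y=m$ and $y=n-m$, and on $x=m-h$, can appear as vertices of the rectangle's tessellation while lying in the interior of an edge of $B$, $E$ or the square. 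These are T-junctions, as already occur in Figure~\ref{trii}. They only receive triangles from one side, so their count is inherited from Lemma~\ref{recta}'s goodness on the boundary.
\item Interior points are good by Lemma~\ref{recta} itself.
\end{itemize}
I would double-check that Lemma~\ref{recta}'s tessellation, being good, has at most $3$ triangles at each boundary vertex; this holds since a boundary vertex sees at most two adjacent squares. This completes the hypotheses of Theorem~\ref{tessa}, yielding the trade $T_{m,n}$.
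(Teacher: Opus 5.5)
Your proposal is correct and uses exactly the paper's tessellation: the triangles $A$, $B$, $C$, $E$, the square $[3m-n,m]\times[m,n-m]$ split along its gradient $-1$ diagonal, and the rectangle $[0,3m-n]\times[m,n-m]$ tessellated via Lemma~\ref{recta}, followed by an application of Theorem~\ref{tessa}. Your vertex-by-vertex count, including the T-junctions along $y=m$, $y=n-m$ and $x=3m-n$, makes explicit the goodness check that the paper leaves to the reader.
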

	
	\begin{proof}
		Consider the tessellation of 
		${\mathcal E}_n$ given by the triangles and rectangle  with the following vertex sets:
		$\{(0,0),(0,m),(m,0)\}$, 
		$\{(m,m),(0,m),(m,0)\}$, 
		$\{(m,0),(m,n-m),(n,0)\}$, 
		$\{(0,n-m),(m,n-m),(0,n)\}$, 
		$\{(3m-n,m),(3m-n,n-m),(m,m)\}$,
		$\{(m,n-m),(3m-n,n-m),(m,m)\}$,
		$\{(0,m),(0,n-m),
		(3m-n,m),(3m-n,n-m)\}$.
		The result then follows by Theorem \ref{tessa} and Lemma \ref{recta}.
	\end{proof}
	
	The trade in Figure \ref{trii} is in fact an example of the previous lemma with $n=11$ and $m=3$.

	\begin{figure}
		$$\begin{array}{|c|c|c|c|c|c|c|c|c|c|c|}
			\hline
			\mk 0 & \mk 1 & \mk 2 & \mk {\bf 3_7} & \mk 4 & 5 & 
			6 & {\bf 7_8} & {\bf 8_9} & {\bf 9_{10}} & {\bf 10_3} \\
			\hline
			1 & \mk 2 & \mk 3 & \mk 4 & \mk 5 & 
			\mk 6 & 7 & {\bf 8_0} & {\bf 9_8} & {\bf 10_9} & {\bf 0_{10}} \\
			\hline
			2 & 3 & \mk 4 & \mk 5 & 
			\mk 6 & \mk 7 & \mk 8 & 9 & 10 & 0 & 1 \\
			\hline
			3 & 4 & 5 & 
			\mk 6 & \mk 7 & \mk 8 & \mk 9 & \mk 10 & 0 & 1 & 2\\
			\hline
			4 & 5 & 
			6 & {\bf 7_3} & \mk 8 & \mk 9 & \mk 10 & \mk {\bf 0_7} & \mk 1 & 2 & {\bf 3_0} \\
			\hline
			5 & 
			6 & 7 & 8 & 9 & \mk 10 & \mk 0 & \mk 1 & \mk 2 & \mk 3 & 4 \\
			\hline
			6 & 7 & 8 & 9 & 10 & 0 & \mk 1 & \mk 2 & \mk 3 & \mk 4 & \mk 5  \\
			\hline
			\mk 7 & 8 & 9 & 10 & 0 & 1 & 2 & \mk 3 & \mk 4 & \mk 5 & \mk 6  \\
			\hline
			\mk 8 & \mk 9 & 10 & 0 & 1 & 2 & 3 & 4 & \mk 5 & \mk 6 & \mk 7 \\
			\hline
			\mk 9 & \mk 10 & \mk  0 & 1 & 2 & 3 & 4 & 5 & 6 & \mk 7 & \mk 8 \\
			\hline
			\mk 10 & \mk  0 & \mk 1 & \mk 2 & 3 & 4 & 5 & 6 &  7 &  8 & \mk 9 \\
			\hline
		\end{array}$$
		\caption{A Latin trade in $B_{11}$ that intersects $P_{11}$ twice.}
	\end{figure}

	\begin{lemma}
		Let $n> 3m\geq 3$.
		Then there is a Latin trade $T_{m,n}$ in $B_n$ such that:
		$$T_0:=\{(0,0;0),(m,0;m),
		(m,m;2m),(0,m;m)\}\subseteq T$$
		and if $(r,c;r+c)\in T_{m,n}\setminus T_0$, then
		$0\leq r\leq m$ and 
		$2m\leq c\leq n-m$. 
		\label{tripletool}
	\end{lemma}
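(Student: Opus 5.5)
The plan is to imitate the proof of Lemma \ref{doubletool}. I would write down an explicit good tessellation of ${\mathcal E}_n$ whose vertex set contains the four points $(0,0),(m,0),(m,m),(0,m)$, and all of whose other vertices, apart from $(0,n)$ and $(n,0)$, lie in the rectangle $R:=\{(x,y): 0\leq x\leq m,\ 2m\leq y\leq n-m\}$. Theorem \ref{tessa} then gives a Latin trade $T_{m,n}$ in $B_n$. As in Lemma \ref{doubletool}, the vertex $(x,y)$ corresponds to the entry $(x,y;x+y)$, so the vertex $(r,c)$ gives the cell in row $r$ and column $c$. The required containment conditions can then be read off directly.

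The tessellation is built from the following pieces.
\begin{itemize}
\item The square $[0,m]\times[0,m]$, split into $\{(0,0),(0,m),(m,0)\}$ and $\{(m,m),(0,m),(m,0)\}$.
\item The large triangle $\{(m,0),(m,n-m),(n,0)\}$, with its right angle at $(m,0)$. It covers the part of ${\mathcal E}_n$ with $x\geq m$.
\item The top triangle $\{(0,n-m),(m,n-m),(0,n)\}$, with its right angle at $(0,n-m)$.
\item The square $[0,m]\times[m,2m]$, split along its gradient $-1$ diagonal into $\{(0,m),(m,m),(0,2m)\}$ and $\{(m,2m),(0,2m),(m,m)\}$.
\item The rectangle $R$, which is non-degenerate because $n>3m$, tessellated by Lemma \ref{recta}.
\end{itemize}
These pieces exactly cover ${\mathcal E}_n$. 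The leftover strip $0\leq x\leq m$, $m\leq y\leq n-m$ is the middle square together with $R$.

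Now list the vertices. The only ones outside $R$ are $(0,0),(m,0),(m,m),(0,m),(0,n),(n,0)$; the points $(0,2m),(m,2m),(0,n-m),(m,n-m)$ lie in $R$. So after removing $(0,n)$ and $(n,0)$, every entry of $T_{m,n}\setminus T_0$ satisfies $0\leq r\leq m$ and $2m\leq c\leq n-m$. It also contains $T_0$, since $(m,m)\mapsto(m,m;2m)$ and $(0,m)\mapsto(0,m;m)$.

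The main obstacle is checking goodness: no point may be a vertex of more than $3$ triangles. For the tessellation from Lemma \ref{recta}, I would argue as follows. Each maximal square is cut along its gradient $-1$ diagonal. Hence the corners $(x_{\min},y_{\max})$ and $(x_{\max},y_{\min})$ of $R$ lie in two triangles of $R$, while $(x_{\min},y_{\min})$ and $(x_{\max},y_{\max})$ lie in one. Interior vertices of $R$ satisfy the bound by Lemma \ref{recta}, assuming that lemma produces a good tessellation. The counts at the special points are then:
\begin{itemize}
\item $(m,m)$: two triangles from the middle square and one from the bottom square, so $3$.
\item $(0,2m)$: two from the middle square and one from $R$, so $3$.
\item $(m,2m)$: one from the middle square and two from $R$, so $3$.
\item $(0,n-m)$: two from $R$ and the top triangle, so $3$.
\item $(m,n-m)$: one from $R$, the top triangle and the large triangle, so $3$.
\item $(0,m)$: three triangles.
\item $(m,0)$: two triangles plus the large one, so $3$.
\end{itemize}
The point $(m,m)$ lies on a leg of the large triangle but is not one of its vertices. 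With this checked, Theorem \ref{tessa} completes the proof.
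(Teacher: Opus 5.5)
Your proposal is correct and uses exactly the tessellation the paper uses. Both split the two $m\times m$ squares along their gradient $-1$ diagonals, add the triangles $\{(m,0),(m,n-m),(n,0)\}$ and $\{(0,n-m),(m,n-m),(0,n)\}$, fill the rectangle $[0,m]\times[2m,n-m]$ using Lemma~\ref{recta}, and then apply Theorem~\ref{tessa}. Your explicit vertex-by-vertex check that the tessellation is good, including the T-junction at $(m,m)$, is extra detail the paper leaves implicit.
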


	\begin{proof}
		Consider the tessellation of 
		${\mathcal E}_n$ given by the triangles and rectangle with the following vertex sets:
		$\{(0,0),(0,m),(m,0)\}$,
		$\{(m,m),(0,m),(m,0)\}$,
		$\{(0,m),(0,2m),(m,m)\}$, 
		$\{(m,2m),(0,2m),(m,m)\}$,
		$\{(0,n-m),(0,n),(m,n-m)\}$, 
		$\{(m,0),(m,n-m),(n,0)\}$, 
		$\{(0,2m),(0,n-m),
		(m,2m),(m,n-m)\}$.
		The result then follows by Theorem \ref{tessa} and Lemma \ref{recta}.
	\end{proof}

	\begin{figure}
		$$\begin{array}{|c|c|c|c|c|c|c|c|c|c|c|}
			\hline
			\mk 0 & \mk 1 & \mk {\bf 2_5} & \mk 3 & \mk 4 & {\bf 5_8} & 
			6 & 7 & {\bf 8_9} & {\bf 9_{10}} & {\bf 10_2} \\
			\hline
			1 & \mk 2 & \mk 3 & \mk 4 & \mk 5 & 
			\mk 6 & 7 & 8 & {\bf 9_0} & {\bf 10_9} & {\bf 0_{10}} \\
			\hline
			2 & 3 & \mk 4 & \mk 5 & 
			\mk 6 & \mk 7 & \mk 8 & 9 & 10 & 0 & 1 \\
			\hline
			3 & 4 & {\bf 5_2} & 
			\mk 6 & \mk 7 & \mk {\bf 8_5} & \mk 9 & \mk 10 & {\bf 0_8} & 1 & {\bf 2_0} \\
			\hline
			4 & 5 & 
			6 & 7 & \mk 8 & \mk 9 & \mk 10 & \mk 0 & \mk 1 & 2 & 3 \\
			\hline
			5 & 
			6 & 7 & 8 & 9 & \mk 10 & \mk 0 & \mk 1 & \mk 2 & \mk 3 & 4 \\
			\hline
			6 & 7 & 8 & 9 & 10 & 0 & \mk 1 & \mk 2 & \mk 3 & \mk 4 & \mk 5  \\
			\hline
			\mk 7 & 8 & 9 & 10 & 0 & 1 & 2 & \mk 3 & \mk 4 & \mk 5 & \mk 6  \\
			\hline
			\mk 8 & \mk 9 & 10 & 0 & 1 & 2 & 3 & 4 & \mk 5 & \mk 6 & \mk 7 \\
			\hline
			\mk 9 & \mk 10 & \mk  0 & 1 & 2 & 3 & 4 & 5 & 6 & \mk 7 & \mk 8 \\
			\hline
			\mk 10 & \mk  0 & \mk 1 & \mk 2 & 3 & 4 & 5 & 6 &  7 &  8 & \mk 9 \\
			\hline
		\end{array}$$
		\caption{Another Latin trade in $B_{11}$ that intersects $P_{11}$ twice.}
	\end{figure}

	We will proceed to prove Theorem \ref{themainone} via the use of Lemma \ref{intermed}. 
	
	\begin{lemma}
		Let $n\geq 2$. 
		For each element $(i,j;k)$ of $P_n$, there exists a Latin trade $T\subset B_n$ such that 
		$(i,j;k)\in T$ and $|T\cap B_n|=2$.
		\label{2exist}
	\end{lemma}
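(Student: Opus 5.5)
The statement should read $|T\cap P_n|=2$ (intersecting $B_n$ itself would give all of $T$). I would prove that version by exploiting the symmetries of $P_n$ and then placing shifted copies of the trades from Lemmas \ref{doubletool} and \ref{tripletool}.

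\emph{Reduction by symmetry.} For a cell $(r,c)$ write its diagonal index as $c-r \Mod{n}$, and set $t:=\lceil (n-3)/2\rceil$, so that $P_n$ consists exactly of the cells with diagonal index in $\{0,1,\dots,t\}$.
\begin{itemize}
\item The map $S\mapsto S\oplus(a,b)$ sends Latin trades of $B_n$ to Latin trades. It changes every diagonal index by $b-a$. In particular $\oplus(1,1)$ fixes each $D_i$, so it suffices to treat one element $(0,d;d)$ of each $D_d$, $0\leq d\leq t$.
\item Transposition $(r,c;s)\mapsto(c,r;s)$ is an automorphism of $B_n$, so it also maps trades to trades. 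It sends diagonal $i$ to $-i$.
\item Composing transposition with $\oplus(0,t)$ maps $P_n$ onto itself and sends $D_i$ to $D_{t-i}$.
\end{itemize}
Hence it suffices to handle $d\geq t/2$, i.e.\ roughly $d\gtrsim n/4$.

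\emph{Main case, via Lemma \ref{doubletool}.} Take $T_{m,n}$ from Lemma \ref{doubletool} and apply $\oplus(0,d)$. Before shifting, $T_0$ has diagonal indices $0,-m,0,n-2m$. The remaining cells satisfy $0\leq r\leq 3m-n$ and $m\leq c\leq n-m$, so their diagonal indices lie in $[\,n-2m,\;n-m\,]$. After the shift:
\begin{itemize}
\item the cells $(0,0)$ and $(m,m)$ move to diagonal $d$, so they give exactly two elements of $P_n$, one of which is the image of $(0,0;0)$;
\item every other cell has diagonal index in $[\,n-2m+d,\;n-m+d\,]$, which contains $n-m+d\equiv d-m$.
\end{itemize}
So $|T\cap P_n|=2$ as soon as $t<n-2m+d$ and $n-m+d\leq n-1$. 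Together with the hypothesis of Lemma \ref{doubletool}, I need an integer $m$ with
$$\max\{\,n/3,\;d\,\}<m<\min\{\,n/2,\;(n+d-t)/2\,\}.$$
Since $t\approx n/2$, the upper bound is about $n/4+d/2$. This interval is nonempty once $d$ exceeds about $n/6$ (for the lower bound $n/3$) and $d<n/2$ (for the lower bound $d$), so it covers the range $d\geq t/2$ produced by the reduction. Finally, an element $(i,j;k)$ in the $\oplus(1,1)$-orbit of $(0,d;d)$ is handled by the correspondingly translated trade.

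\emph{Fallback, boundary cases, and the main obstacle.} If the interval for $m$ is empty, I would use Lemma \ref{tripletool} in the same way. There $T_0$ has diagonals $0,-m,0,m$ and the rest lies in diagonals $[m,n-m]$. After $\oplus(0,d)$ the conditions become $d<m$, $t<m+d$ and $3m<n$.

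The step I expect to be the real obstacle is the bookkeeping of floors and ceilings for small $n$ and for the extreme values of $d$, namely $d=t$ and $d$ near $t/2$. There I would check the existence of $m$ directly and, if necessary, use a trade of size $4$ or $6$ by hand. For $n\leq 4$ I would simply exhibit intercalates or row-swap trades. For even $n$, an intercalate $I_{i,j}$ that meets $P_n$ in exactly two cells (both on the same diagonal) is also an option.
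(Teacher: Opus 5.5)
The overall approach matches the paper's, but the integer bookkeeping leaves two families of cases uncovered, and one of them needs a trade your proposal does not supply. Your symmetry reduction is exactly the paper's ($\oplus(1,1)$ along diagonals, and transpose plus shift sending $D_i$ to $D_{t-i}$). Placing $T_{m,n}\oplus(0,d)$ so that $(0,0)$ and $(m,m)$ land on $D_d$ is also what the paper does. Your diagonal bookkeeping for both lemmas is correct, and you are right that the statement should read $|T\cap P_n|=2$.

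\textbf{Even $n$.} The gap is your claim that the window for $m$ ``covers the range $d\geq t/2$''. This is true over the reals but not over the integers, and the failures are exactly the cases the paper handles separately. Take $d=t=n/2-1$. Lemma~\ref{doubletool} needs an integer $m$ with $n/2-1<m<n/2$, and Lemma~\ref{tripletool} needs $n/2-1<m<n/3$. Neither ever applies, for any even $n$; for $n=6$, no $d\geq t/2$ works at all. So the intercalate is not an optional extra: you must use it. It in fact settles all even $n$ in one line, as in the paper. The intercalate on rows $0,n/2$ and columns $d,d+n/2$ meets $D_d$ twice and $D_{d+n/2}$ twice, and $d+n/2\geq n/2>t$.

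\textbf{Odd $n$.} The paper takes $m=d+1$. Within $d\geq t/2$, this works in Lemma~\ref{tripletool} iff $d<(n-3)/3$ and in Lemma~\ref{doubletool} iff $d>(n-3)/3$. That leaves $d=(n-3)/3$ when $3\mid n$. Your freedom in $m$ repairs this for $n\geq 15$: take $m=n/3+1$ in Lemma~\ref{doubletool}. But at $n=9$, $d=2$ (equivalently $d=1$), you need $3<m<4$ for one lemma or $2<m<3$ for the other. Since $B_9$ has no intercalates, an explicit trade of size $6$ is required, and your proposal does not give one.

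The missing object is the paper's Case~2 trade. For $n=3j+3$, take the $2\times 3$ Latin subrectangle on rows $0,j+1$ and columns $j,2j+1,3j+2$; these columns form a coset of the subgroup of order $3$. Swapping its two rows gives the disjoint mate. It meets each of $D_j$, $D_{2j+1}$ and $D_{n-1}$ twice, and $2j+1,n-1>t$, so it meets $P_n$ exactly twice. For $n=9$ it is $\{(0,2;2),(0,5;5),(0,8;8),(3,2;5),(3,5;8),(3,8;2)\}$, which meets $P_9$ only in $(0,2;2)$ and $(3,5;8)$. Your row swap for $n=3$ is this same trade with $j=0$. With these two additions the argument is complete.
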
 	
	
	\begin{proof}
		Since $P_n\oplus (1,1)=P_n$, without loss of generality, we may assume 
		that 
		$i=0$. 
		We first consider when $n$ is 
		even. Then observe that the Latin trade 
		$$T:=\{(0,j;j),(0,j+n/2,j+n/2),
		(n/2,j;j+n/2),(n/2,j+n/2;j)
		\}$$
		intersects the diagonal $D_{j}$ and
		$D_{j+n/2}$  twice each.  
		Since 
		$
		n/2-1 < j+n/2 \leq n-1$,  
		the diagonal $D_{j+n/2}$ does not intersect $P_n$. 
		Thus $T$ intersects $P_n$ exactly twice. 
		
		Otherwise $n$ is odd. Observe that $P_n^T\oplus ((n+3)/2,0)=P_n$. Thus we can assume that   
		$\lceil (n-3)/4\rceil \leq j\leq  
		(n-3)/2$.

		\noindent {\bf Case 1}:  
		$\lceil (n-3)/4\rceil \leq j < (n-3)/3$. 
		By Lemma \ref{tripletool}, 
		there is a Latin trade 
		$T_{j+1,n}\oplus (0,j)$ 
		in $B_n$ 
		which includes 
		$(0,j;j),(0,2j+1;2j+1),
		(j+1,j;2j+1),(j+1,2j+1,3j+2)$ 
		and all other elements in 
		cells $(r,c)$ where
		$0\leq r\leq j+1$ and 
		$3j+2\leq c\leq n-1$.  
		Observe that $T_{j+1,n}\oplus (0,j)$ intersects $D_{j}$ twice
		(at $(0,j;j)$ and $(j+1,j;2j+1)$), with all other elements in $D_{\alpha}$ for some 
		$\alpha$ such that 
		$$(n-3)/2< 2j+1\leq \alpha\leq n-1.$$  
		Thus this trade intersects 
		$P_n$ exactly twice.  
		
		\noindent {\bf Case 2}:  
		$j = (n-3)/3$.
		Then there is a Latin trade 
		$$T=
		\{(0,j;j),(0,2j+1;2j+1),
		(j+1,j;2j+1),(j+1,2j+1;3j+2),
		(0,3j+2;3j+2),(j+1,3j+2;j)\}.$$
		This intersects $D_j$, $D_{2j+1}$ and $D_{3j+2}=D_{n-1}$ twice each.
		Since $2j+1=(2n-3)/3>(n-3)/2$, 
		$D_{2j+1}$ does not intersect $P_n$.
		
		\noindent {\bf Case 3}:  
		$(n-3)/3 < j\leq (n-3)/2$.
		By Lemma \ref{doubletool}, 
		there is a Latin trade 
		$T_{j+1,n}\oplus (0,j)$ 
		in $B_n$ 
		which includes 
		$(0,j;j),
		(j+1,j;2j+1),(j+1,2j+1;3j+2),
		(j+1,n-1;j)$ 
		and all other elements in 
		cells $(r,c)$ where
		$0\leq r\leq 3j+3-n$ and 
		$2j+1\leq c\leq n-1$.   
		Observe that $T_{j+1,n}\oplus (0,j)$ intersects $D_{j}$ twice
		(at $(0,j;j)$ and $(j+1,j;2j+1)$),
		with all other elements in $D_{\alpha}$ for some 
		$\alpha$ such that 
		$$(n-3)/2< n-2-j\leq \alpha\leq n-1.$$  
		Thus this trade intersects 
		$P_n$ exactly twice.    
	\end{proof}	
	
	Next, as an intermediary step to proving Theorem \ref{themainone}, 
	we define $Q_n\subseteq P_n$ as follows: 
	$$Q_n=
	\{(i,j;i+j\Mod{n}): 
	1\leq i\leq j\leq 
	i+\lfloor (n-3)/2\rfloor \}.$$
	\begin{figure}
		$$\begin{array}{|c|c|c|c|c|c|c|c|c|c|c|}
			\hline
			0 & 1 & 2 & 3 &  4 & 5 & 
			6 & 7 & 8 & 9 & 10 \\
			\hline
			1 & \mk 2 & \mk 3 & \mk 4 & \mk 5 & 
			\mk 6 & 7 & 8 & 9 & 10 & 0 \\
			\hline
			2 & 3 & \mk 4 & \mk 5 & 
			\mk 6 & \mk 7 & \mk 8 & 9 & 10 & 0 & 1 \\
			\hline
			3 & 4 & 5 & 
			\mk 6 & \mk 7 & \mk 8 & \mk 9 & \mk 10 & 0 & 1 & 2\\
			\hline
			4 & 5 & 
			6 & 7 & \mk 8 & \mk 9 & \mk 10 & \mk 0 & \mk 1 & 2 & 3 \\
			\hline
			5 & 
			6 & 7 & 8 & 9 & \mk 10 & \mk 0 & \mk 1 & \mk 2 & \mk 3 & 4 \\
			\hline
			6 & 7 & 8 & 9 & 10 & 0 & \mk 1 & \mk 2 & \mk 3 & \mk 4 & \mk 5  \\
			\hline
			7 & 8 & 9 & 10 & 0 & 1 & 2 & \mk 3 & \mk 4 & \mk 5 & \mk 6  \\
			\hline
			8 &  9 & 10 & 0 & 1 & 2 & 3 & 4 & \mk 5 & \mk 6 & \mk 7 \\
			\hline
			9 &  10 & 0 & 1 & 2 & 3 & 4 & 5 & 6 & \mk 7 & \mk 8 \\
			\hline
			10 &  0 & 1 & 2 & 3 & 4 & 5 & 6 &  7 &  8 & \mk 9 \\
			\hline
		\end{array}$$
		\caption{The critical set $Q_{11}$}
	\end{figure}

	\begin{lemma}
		The {\rm PLS} 	
		$Q_n$ has unique completion to $B_n$. 	
	\end{lemma}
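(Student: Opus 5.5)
The plan is to show directly that $Q_n$ has a unique completion by a \emph{forcing} argument. We fill in the remaining cells of $B_n$ one at a time, in an order chosen so that at each step the next entry is forced in one of two ways. Either the cell's row and column together already contain $n-1$ distinct symbols, or some symbol $s$ is missing from a row (respectively column) and every other empty cell of that row (column) lies in a column (row) that already contains $s$. Since each forced entry agrees with $B_n$, any completion of $Q_n$ must equal $B_n$.

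Write $w=\lfloor (n-3)/2\rfloor+1$. Row $i$ of $Q_n$ contains the consecutive symbols $2i,2i+1,\dots$, truncated at column $n-1$. Column $j$ contains the symbols $2j-w+1,\dots,2j$, coming from rows $\max(1,j-w+1),\dots,j$. I would first treat the \emph{ends} of the band, where the structure is lopsided. Row $0$ is entirely empty and column $0$ is empty, while the rows near $n-1$ and the columns near $1$ contain fewer than $w$ given entries. The idea is to exploit cells such as $(n-1,n-2)$, $(1,0)$ and $(0,1)$, and cells in row $0$. For these, the already placed entries in the relevant column together with the absence of a symbol from the other rows should force a symbol into a unique position in row $0$ or column $0$. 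For instance, symbol $j$ must appear in column $j$. If all rows $1,\dots,j$ already contain $j$ outside column $j$, or are blocked from it, then the only possibility left is row $0$.

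Next I would extend the band diagonal by diagonal. The goal is to show that once $D_1\cup\dots\cup D_{t}$ (restricted to the rows and columns covered so far) is fixed, the diagonal just below, $D_0$ with row $0$ included, or the diagonal just above is forced cell by cell. The propagation starts from a corner cell and moves along the diagonal: each new forced entry adds a symbol to a row and to a column, which then pins down the neighbouring cell. Concretely, I expect the step to use ``symbol $s$ has only one legal cell in this row'' rather than ``cell has only one legal symbol''. This is because a single row–column pair contains only about $2w\approx n-2$ symbols at the start, one short of what is needed. After the first few diagonals adjacent to the band are filled, the counts exceed $n-1$ and the remaining cells fall immediately by the row-plus-column count. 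It will be convenient to use the symmetry $S\mapsto S^T$ together with a suitable $\oplus$ shift, as in the proof of Lemma~\ref{2exist}, to halve the casework between the two sides of the band.

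The main obstacle is choosing the order of the first few forced cells near the two ends of the staircase. There the band is truncated (row $0$ is missing and the rows near $n-1$ are short), and parity of $n$ changes $w$ relative to $n/2$. I would first work the forcing sequence out explicitly for $Q_{11}$ and an even example such as $Q_{10}$, read off the general pattern, and then verify it by an induction on the diagonal index, with the parity of $n$ handled as two cases.
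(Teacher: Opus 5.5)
You have the right overall strategy (the paper's proof is also a forcing sequence), but your proposal never fixes the order, and the order you do suggest fails at the first step.

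While row $0$ and column $0$ are entirely empty, a symbol-based rule cannot force any cell $(r,c)$ with $r,c\neq 0$, because the cells $(r,0)$ and $(0,c)$ remain legal alternatives. The cells of row $0$ and column $0$ themselves see at most $w\le n-2$ symbols, so the count rule does not apply to them either. Concretely, in $Q_{11}$ no empty cell of row $0$ or column $0$ is forced at the outset by either of your rules. Neither is $(10,9)$, which sees only $\{9\}\cup\{3,\dots,7\}$.

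The order that works runs the other way. First fill the triangle \emph{above} the band, diagonal by diagonal: the cells $(r,r+w)$ for $1\le r\le n-1-w$, then $(r,r+w+1)$, and so on up to $(1,n-1)$. Only when every cell with $1\le i\le j\le n-1$ is filled, sweep the columns $c=n-1,n-2,\dots,0$, filling $(c+1,c),\dots,(n-1,c)$ and then $(0,c)$. At $(c+k,c)$ the row already holds $2c+k+1,\dots,c+k+n-1$ and the column holds $c+1,\dots,2c+k-1$, so only $2c+k$ is left. This is essentially the paper's sequence.

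The heuristic that steered you away from this is off by one. For odd $n$ you have $w=(n-1)/2$. At $(r,r+w)$ the row holds $2r,\dots,2r+w-1$ and the column (rows $r+1,\dots,r+w$) holds $2r+w+1,\dots,2r+2w$. These are $2w=n-1$ distinct residues, missing exactly $2r+w$. So the plain row-plus-column count forces every cell of the sequence, and no symbol-based rule is ever needed.

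For even $n$ your estimate $2w=n-2$ is accurate, but it signals that the statement fails, not that a subtler rule is required. With the floor in the definition, $Q_n$ is disjoint from the intercalate on cells $(0,n/2-1)$, $(n/2,n/2-1)$, $(0,n-1)$, $(n/2,n-1)$. So $Q_n$ has at least two completions, and the even case of your plan cannot be completed. The paper's sequence, written with $N=\lceil (n-3)/2\rceil$, likewise goes through only for odd $n$, or for even $n$ once the band is widened to $n/2$ diagonals.
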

	
	\begin{proof}
		We give below a sequence of cells whose subsequent completion from $Q_n$ is ``forced'' to obtain $B_n$. 	By ``forced'', we mean that 
		we can sequence the empty cells so that when placing a symbol in cell $(i,j)$,   there is only one symbol which occurs in neither row $i$ nor column $j$ of the defining set so far. 
		In what follows, let $N=\lceil (n-3)/2\rceil$. 
		
		$$\begin{array}{l}
			(1,N+1),(2,N+2),\dots , (n-(N+1),n-1), \\
			(1,N+2),(2,N+3),\dots , (n-(N+2),n-1), \\  
			\dots, \\
			(1,n-1), \\
			(0,n-1),  \\
			(n-1,n-2),(0,n-2) \\
			(n-2,n-3), (n-1,n-3), (0,n-3)   \\
			\dots \\
			(1,0), (2,0),\dots ,(n-1,0),(0,0). \\ 
		\end{array}$$ 
	\end{proof}
	
	Since $P_n=P_n\oplus (1,1)$ and
	$Q_n\subset P_n$, we have the following corollary. 
	
	\begin{corollary}
		If $P\subset P_n$ such that $|P_n\setminus P|=1$, 
		then $P$ is a defining set of $B_n$.  	
	\end{corollary}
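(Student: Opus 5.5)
The idea is to use the symmetry $P_n=P_n\oplus(1,1)$ to move the deleted entry into a fixed position relative to the critical set $Q_n$. Then the known unique completion of $Q_n$ carries over to $P$.

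Let $P=P_n\setminus\{(i,j;i+j)\}$ with $(i,j;i+j)\in P_n$. Then $(i,j)$ lies on some diagonal $D_d$ with $0\leq d\leq \lceil (n-3)/2\rceil$. Write $s=n-i$ and set $\phi(S)=S\oplus(s,s)$. By iterating $P_n\oplus(1,1)=P_n$ we get $\phi(P_n)=P_n$, so
$$\phi(P)=P_n\setminus\{(0,j-i;j-i)\},$$
where $j-i\equiv d \pmod n$. The removed cell is therefore in row $0$. Every element of $Q_n$ has row index at least $1$, so $Q_n\subseteq \phi(P)$.

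By the preceding lemma $Q_n$ has unique completion to $B_n$, and hence so does $\phi(P)$. Before using this I must check the index ranges. $Q_n$ uses $j\leq i+\lfloor (n-3)/2\rfloor$, while $P_n$ uses diagonals up to $\lceil (n-3)/2\rceil$. For odd $n$ these bounds agree. For even $n$ the inclusion $Q_n\subseteq P_n$ is immediate from $\lfloor\cdot\rfloor\leq\lceil\cdot\rceil$. In both cases $Q_n\subseteq P_n$, and membership in $P_n$ depends only on $j-i \bmod n$. So each element of $Q_n$, having row at least $1$, is distinct from the removed row-$0$ entry and survives in $\phi(P)$.

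Next I transfer the property back. The map $\oplus(a,b)$ is a bijection of $B_n$ to itself, and it is induced by an isotopism: permute the rows by $+a$, the columns by $+b$ and the symbols by $+a+b$. Isotopisms send Latin squares to Latin squares, so $\oplus(a,b)$ maps any Latin square containing $S$ to one containing $S\oplus(a,b)$, and it preserves distinctness. Hence if $L'\neq B_n$ contained $P$, its image under $\phi$ would be a Latin square other than $\phi(B_n)=B_n$ containing $\phi(P)$, contradicting uniqueness. Therefore $P$ is a defining set.

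The only delicate points are the index bookkeeping in showing $Q_n\subseteq \phi(P)$ (the floor versus ceiling issue in the definition of $Q_n$) and the remark that $\oplus$ extends to an isotopism acting on all Latin squares, not only on subsets of $B_n$. Neither should cause real difficulty. It is worth noting that this corollary, together with Lemma \ref{2exist}, will be exactly what Lemma \ref{intermed} requires in the case $k=2$.
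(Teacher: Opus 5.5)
Your argument is the paper's own: use $P_n\oplus(1,1)=P_n$ to move the deleted entry into row $0$, note that $Q_n\subseteq P_n$ lies entirely in rows $1,\dots,n-1$, and invoke the unique completion of $Q_n$. The paper compresses this into one line, and your isotopism remark correctly supplies the transfer back.

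One caveat, which you inherit from the paper rather than introduce: for even $n$ the set $Q_n$ defined with $\lfloor (n-3)/2\rfloor$ is not a defining set. For example, $|Q_4|=3$, while the four disjoint intercalates $I_{i,j}$ force any defining set of $B_4$ to have size at least $4$. So the preceding lemma must be read with $\lceil (n-3)/2\rceil$, which is the value $N$ its forcing sequence uses. That version is still contained in $P_n$ and still avoids row $0$, so your proof goes through unchanged.
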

	
	The previous corollary, together with Lemmas \ref{2exist} and \ref{intermed}, imply Theorem 
	\ref{themainone}.
	As an extra, we next establish that $Q_n$ is a minimally $1$-strong defining set (i.e. a critical set) of $B_n$. 
	
	\begin{lemma}
		For each $(i,j;k)\in Q_n$, 
		there exists a Latin trade $T\subset B_n$ such  
		$T\cap Q_n=\{(i,j;k)\}$. 	
	\end{lemma}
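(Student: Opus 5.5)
The plan is to build, for each $(i,j;i+j)\in Q_n$, a trade by translating one of the trades $T_{m,n}$ of Lemma~\ref{doubletool} or Lemma~\ref{tripletool}. The translation is $T_{m,n}\oplus(i,j)$, so that the corner $(0,0;0)$ of $T_0$ lands on $(i,j;i+j)$. The parameter $m$ is tuned to $d:=j-i$, where $0\leq d\leq M:=\lfloor (n-3)/2\rfloor$. We then check that no other element of the translated trade lies in $Q_n$. Two facts about $Q_n$ drive this check.

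\begin{itemize}
\item A cell $(r,c)$ with $r,c\in\{0,\dots,n-1\}$ lies in $Q_n$ only if $r\neq 0$ and $r\leq c\leq r+M$. Cells strictly below the main diagonal, cells in row $0$, and cells more than $M$ to the right of the diagonal are all free.
\item $Q_n$ is invariant under $(r,c)\mapsto(n-c,n-r)$. This map preserves $d$ and sends $B_n$ to an isotopic copy of itself (symbols $s\mapsto 2n-s$), hence sends trades to trades. We may use it to swap the roles of the ``row'' and ``column'' directions, which roughly halves the casework.
\end{itemize}

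First, look at the four elements of $T_0$ after translation. The element $(i+m,j;\cdot)$ has column $j<i+m$ as soon as $m>d$, so it lies below the diagonal, or in row $0$ if $i+m\equiv 0$. The element $(i,j+m;\cdot)$ in the Lemma~\ref{tripletool} case, or $(i,j+n-m;\cdot)$ in the Lemma~\ref{doubletool} case, has column at distance more than $M$ from $i$ once $m\geq M-d+1$. If instead that column wraps past $n-1$, it becomes a column smaller than $i$, again off $Q_n$. The fourth corner $(i+m,j+m)$ or $(i+m,j+n-m)$ must be checked in the same way. So I would take $m=\max(d+1,\,M-d+1)$, or a nearby value. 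I would then use Lemma~\ref{doubletool} when $2m<n<3m$ and Lemma~\ref{tripletool} when $n>3m$, handling separately any boundary value where neither applies (as in Case~2 of Lemma~\ref{2exist}, where an explicit small trade of size $6$ was used).

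Next, treat the remaining elements of the trade. By the two lemmas, they occupy a rectangle of rows $[0,3m-n]$ or $[0,m]$ and columns $[m,n-m]$ or $[2m,n-m]$. After translation, every such cell $(r,c)$ has column offset $c-r\geq m-(3m-n)$ or $\geq 2m-m$. The claim to verify is that this offset exceeds $M$, or else that the column wraps modulo $n$ to a value below the row. A useful parallel is the proof of Lemma~\ref{2exist}, where the analogous check reduced to showing that all extra elements lie on diagonals $D_\alpha$ with $\alpha>(n-3)/2$. Here $Q_n$ is a sub-band of $P_n$, so the same diagonal bookkeeping should apply. Wrapped cells must additionally be checked against the ``column $\geq$ row'' condition, and row $0$ being empty in $Q_n$ gives extra room whenever rows wrap.

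The main obstacle is the interplay between the two constraints $m>d$ and $m\geq M-d+1$, together with the ranges $2m<n<3m$ and $n>3m$ that the lemmas require. For $d$ near $M/2$ these pull in opposite directions, and for small $d$ or $d$ near $M$ one of the translated corners may land back inside the band after wrapping. I would first try to use the anti-transpose symmetry to assume $d\geq M/2$, so that $m=d+1$ suffices for both conditions. I would then split into the cases $3(d+1)<n$, $3(d+1)=n$ and $3(d+1)>n$, mirroring Cases~1--3 of Lemma~\ref{2exist}, and treat the boundary case by an explicit small trade.
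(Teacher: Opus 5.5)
\textbf{There is a genuine gap: your translation of $T_{m,n}$ always puts a second trade cell into $Q_n$ when $j\le n/2$, and the symmetry you plan to use cannot fix this.}

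The plan breaks at the element $(m,m;2m)$. This element lies in $T_0$ in \emph{both} Lemma~\ref{doubletool} and Lemma~\ref{tripletool}, and the failure is not confined to boundary values. Together with $(0,0;0)$ it gives two trade cells on the same diagonal $D_0$. That is exactly why these trades were useful in Lemma~\ref{2exist}, where the aim was to meet $P_n$ \emph{twice}. After translating by $(i,j)$ they become $(i,j)$ and $(i+m,j+m)$, both of offset $d=j-i$.
\begin{itemize}
\item If $j+m\le n-1$, then $1\le i+m\le j+m\le n-1$, so $(i+m,j+m;i+j+2m)\in Q_n$.
\item If $i+m\ge n+1$, the wrapped cell again has row at least $1$ and offset $d$, so it also lies in $Q_n$.
\end{itemize}
Hence you need $n-j\le m\le n-i$. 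Both lemmas require $m<n/2$, so no choice of $m$ works whenever $j\le n/2$. Concretely, for $n=11$ and $(i,j)=(1,3)$ your rule gives $m=3$. Then Lemma~\ref{tripletool} applies, and the translated trade contains $(4,6;10)\in Q_{11}$.

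The rectangle of Lemma~\ref{doubletool} causes a similar problem. After translation its offsets go down to $d+n-2m$, which is at most $M$ for small $d$ (for $n=11$, $d=0$, $m=5$ it equals $1$).

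Your proposed repair does not help. As you note yourself, $(r,c)\mapsto(n-c,n-r)$ preserves $d$, so it cannot be used to assume $d\ge M/2$. What it does is replace $(i,j)$ by $(n-j,n-i)$, exchanging small $j$ for large $j$.

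The missing idea is to place the target at a vertex whose companions in the trade all sit strictly below the main diagonal, in row $0$, or at offset $>M$. Placing it at the corner $(0,0)$ of $T_{m,n}$ does not work, because its diagonal partner $(m,m)$ is unavoidable. The paper does this with new trades, after assuming $i+j\le n$:
\begin{itemize}
\item For $j<n/2$ it uses a different tessellation in which the target is the off-diagonal corner $(0,d+1)$ of an initial $(d+1)$-square, translated by $(i,i-1)$. The other three corners land at $(i,i-1)$, $(j+1,i-1)$ and $(j+1,j)$, strictly below the diagonal. The rectangle $[d+1,n-d-1]\times[0,d+1]$ lands in columns $i-1,\dots,j$, from row $j+1$ onwards.
\item For $j\ge n/2$ it uses an intercalate when $n$ is even. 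When $n$ is odd it uses the explicit trade on $\{(i,j),(i,j-N)\}\cup\{(i+N,c),(i+N+1,c): j-N\le c\le j\}$.
\end{itemize}

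Even on that route you must check wrapped cells explicitly. When $i\ge d+2$ the translated rectangle wraps into rows $1,\dots,i-d-1$: its corner $(n-d-1,0)$ goes to $(i-d-1,i-1)\in Q_n$. For example, for $n=5$ and $(i,j)=(2,2)$ the paper's Case (1) trade is the swap of columns $1$ and $2$, which meets $Q_5$ three times. The odd-$n$ Case (2) trade does work for that cell.
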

	
	\begin{proof} 
		First observe that
		$$Q_n=\{(n-j,n-i;n-k):(i,j,k)\in Q_n\}.$$
		We can therefore assume, 
		without loss of generality, that 
		$i+j\leq n$.
		
		So we can consider just two cases: (1) $j<n/2$; and (2)
		$j\geq n/2$ and 
		$i+j\leq n$.		
		
		For Case (1) we use the trade $T\oplus (i,i-1)$, 
		where $T$ is the Latin trade based on the tessellation of ${\mathcal E}_n$ with triangles on vertices
		$\{(0,0),(0,j-i+1),
		(j-i+1,0)
		\}$, 
		$\{(j-i+1,j-i+1),(0,j-i+1),
		(j-i+1,0)
		\}$,
		$\{(0,j-i+1),(0,n),
		(n-j+i-1,j-i+1)
		\}$,
		$\{(n-j+i-1,0),(n,0),
		(n-j+i-1,j-i+1)
		\}$ and
		the rectangle on the set of vertices
		$\{
		(j-i+1,0),(j-i+1,j-i+1),
		(i-j-1,0)
		(j-i+1,n-j+i-1)
		\}$.
		
		For Case (2), 
		if $n$ is even, 
		use the Latin trade on cells $(i,j)$, $(i+n/2,j)$, 
		$(i,j+n/2)$ and $(i+n/2,j+n/2)$. 
		
		Otherwise $n$ is odd. Let $N=(n-1)/2$. We use the Latin trade on the
		set of cells 
		$$\{(i,j),(i,j-N)\}\cup 
		\{(i+N,c),(i+N+1,c): 
		j-N\leq c\leq j \}.$$
	\end{proof}
	
	\begin{corollary}
		The {\rm PLS} $Q_n$ is a critical set of $B_n$ for all $n\geq 2$. 
	\end{corollary}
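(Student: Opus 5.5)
The statement is that $Q_n$ is a critical set of $B_n$ for all $n\geq 2$. I would obtain it by combining the two preceding lemmas through Lemma \ref{intermed} with $k=1$.

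First, $Q_n$ is a defining set: the lemma asserting that $Q_n$ has unique completion to $B_n$ gives this directly. Equivalently, by the observation in the Introduction, $Q_n$ meets every Latin trade in $B_n$, so $Q_n$ is $1$-strong.

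Second, for minimality I would invoke the second half of Lemma \ref{intermed} with $k=1$. A $1$-strong set $D$ is minimally $1$-strong exactly when each triple of $D$ lies in some Latin trade $T\subseteq B_n$ with $|T\cap D|=1$. The lemma just proved supplies, for each $(i,j;k)\in Q_n$, a trade $T$ with $T\cap Q_n=\{(i,j;k)\}$, which is precisely this condition. Concretely, deleting $(i,j;k)$ from $Q_n$ leaves a set disjoint from $T$. That set therefore completes both to $B_n$ and to $(B_n\setminus T)\cup T'$, where $T'$ is the disjoint mate of $T$, so it is not a defining set. Hence no proper subset of $Q_n$ is a defining set.

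Since a critical set is by definition a minimally $1$-strong defining set, the corollary follows. No genuine obstacle remains: all the substantive work is in the forcing sequence and the trade constructions. The only point to check is that the trades in the previous lemma are valid for small $n$ such as $n=2,3$, where $Q_n$ may be empty or tiny. For $n=2$ we have $\lfloor (n-3)/2\rfloor<0$, so $Q_2$ is empty. I would check that $B_2$ is indeed uniquely determined by the empty set, or else treat this degenerate case directly.
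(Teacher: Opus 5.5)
Your route is the paper's own. The unique-completion lemma makes $Q_n$ a defining set. The lemma giving, for each entry, a trade that meets $Q_n$ only in that entry gives minimality. Lemma~\ref{intermed} with $k=1$ then finishes, as does your direct remark that every proper subset of $Q_n$ misses some trade. The gap is the check you deferred for $n=2$: it fails. The empty set is contained in both Latin squares of order $2$. So $Q_2=\emptyset$ is not a defining set of $B_2$, and no separate treatment of this case can save the statement.

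Nor is this an isolated degeneracy. For every even $n$ we have $\lfloor (n-3)/2\rfloor=(n-4)/2$, and $Q_n$ is disjoint from the intercalate $I_{0,n/2-1}$ on the cells $(0,n/2-1)$, $(n/2,n/2-1)$, $(0,n-1)$, $(n/2,n-1)$. Row $0$ is excluded by $i\geq 1$. The cell $(n/2,n/2-1)$ has $j<i$. The cell $(n/2,n-1)$ has $j-i=(n-2)/2>(n-4)/2$. Hence $Q_n$ is not a defining set of $B_n$ for any even $n$. Already $|Q_4|=3$, below the minimum size $n^2/4=4$ of a defining set of $B_4$ given by Theorem~\ref{themaintwo}. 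Correspondingly, the forcing sequence in the unique-completion lemma you invoke stalls at its first cell $(1,n/2)$, where both $1$ and $n/2+1$ (mod $n$) are still available.

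So your proof imports an ingredient that is false for even $n$. The corollary should be restricted to odd $n\geq 3$. There $\lfloor (n-3)/2\rfloor=\lceil (n-3)/2\rceil$, and your argument goes through as written.
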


	\section{Computational results}

	Although we have proven that $P_n$ is a minimally $2$-strong defining set for $B_n$, in general for odd $n$, it is not a minimal $2$-strong defining set of minimum size.
	For $n=5$, we have verified by computer that the minimum such size is given by $9$, an example of which can be seen in Figure \ref{min5}.
	\begin{figure}
		$$\begin{array}{|c|c|c|c|c|}
			\hline
			0 & 1 & & &  \\
			\hline
			1 &  & & 4 &  \\
			\hline
			& 3 & & & \\
			\hline
			& & 0 & & 2 \\
			\hline
			& & & 2 & 3 \\
			\hline
		\end{array}$$
		\caption{A $2$-strong defining set of minimum size $9$ in 
			$B_5$.} 
		\label{min5}
	\end{figure}
	
	
	\begin{table}[ht]
		\centering
		\renewcommand{\arraystretch}{1.2}
		\newcommand{\shaded}{\cellcolor{gray!25}\phantom{00}}
		\newcommand{\shadedval}[1]{\cellcolor{gray!25}#1}
		
		\begin{tabular}{|c|*{10}{c|}}
			\hline
			\diagbox{$k$}{$n$} & 2 & 3 & 4 & 5 & 6 & 7 & 8 & 9 & 10 & 11 \\
			\hline
			1  & 1 & 2 & 4 & 6 & 9 & 12 & 16 & 20 & 25 & \shadedval{22--30} \\
			\hline
			2  & 2 & 3 & 8 & 9 & 18 & \shadedval{18} & 32 & \shadedval{32} & 50 & \shadedval{32--45} \\
			\hline
			3  & 3 & 5 & 12 & 12 & 27 & \shadedval{21} & 48 & \shadedval{45} & 75 & \shadedval{44--56} \\
			\hline
			4  & 4 & 6 & 16 & 15 & 36 & \shadedval{27} & 64 & \shadedval{54} & 100 & \shadedval{55--67} \\
			\hline
			5  & \shaded & 8 & \shaded & 19 & \shaded & \shadedval{34} & \shaded & \shadedval{72} & \shaded & \shadedval{65--76} \\
			\hline
			6  & \shaded & 9 & \shaded & 20 & \shaded & \shadedval{39} & \shaded & 81 & \shaded & \shadedval{75--86} \\
			\hline
			7  & \shaded & \shaded & \shaded & 24 & \shaded & \shadedval{42} & \shaded & \shaded & \shaded & \shadedval{85--94} \\
			\hline
			8  & \shaded & \shaded & \shaded & 25 & \shaded & \shadedval{48} & \shaded & \shaded & \shaded & \shadedval{96--99} \\
			\hline
			9  & \shaded & \shaded & \shaded & \shaded & \shaded & 49 & \shaded & \shaded & \shaded & \shadedval{108} \\
			\hline
			10 & \shaded & \shaded & \shaded & \shaded & \shaded & \shaded & \shaded & \shaded & \shaded & \shadedval{110} \\
			\hline
			11 & \shaded & \shaded & \shaded & \shaded & \shaded & \shaded & \shaded & \shaded & \shaded & \shadedval{120} \\
			\hline
			12 & \shaded & \shaded & \shaded & \shaded & \shaded & \shaded & \shaded & \shaded & \shaded & 121 \\
			\hline
		\end{tabular}
		
		\caption{Minimum $k$-strong defining set sizes in 
			$B_n$.} 
		\label{min_k_bn}
	\end{table}
	
	Table~\ref{min_k_bn} shows sds($B_{n},k$) for $1 \leq k \leq 12$ and $1 \leq n \leq 11$.  These results were obtained by using the integer programming approach described in~\cite{Be05}.
	
	For $n \leq 5$, we were able to generate all trades in the Latin square $B_{n}$. Using these trades we constructed a 0-1 integer linear program (ILP) to find the size of the minimal set in $B_{n}$ that intersected every trade in at least $k$ places. For $n$ even, 
	the results agree with 
	Theorem \ref{themaintwo}. 
	For $n > 6$ we were unable to generate all possible trades in $B_{n}$ and thus some values, which are shaded, are simply lower bounds, or ranges in the case of $n=11$.
	
	One might conjecture from these results that if 
	$d_p$ is the size of the smallest Latin trade in $B_p$ (where $p$ is prime), then 
	removing any entry from $B_p$ results in a minimum $(d_p-1)$-strong defining set. This conjecture is equivalent to asserting that any pair of entries in $B_p$ is contained in a Latin trade of size $d_p$.

	\begin{table}[ht]
		\centering
		\renewcommand{\arraystretch}{1.2}
		\newcommand{\shaded}{\cellcolor{gray!25}\phantom{00}}
		
		\begin{tabular}{|c|*{12}{c|}}
			\hline
			\diagbox{$k$}{main class}
			& 6.1 & 6.2 & 6.3 & 6.4 & 6.5 & 6.6 & 6.7 & 6.8 & 6.9 & 6.10 & 6.11 & 6.12 \\
			\hline
			1 & 9  & 11 & 11 & 10 & 10 & 11 & 12 & 11 & 10 & 10 & 10 & 11 \\
			\hline
			2 & 18 & 18 & 16 & 14 & 14 & 16 & 20 & 18 & 18 & 16 & 14 & 14 \\
			\hline
			3 & 27 & 27 & 24 & 21 & 21 & 22 & 29 & 27 & 27 & 23 & 19 & 20 \\
			\hline
			4 & 36 & 36 & 36 & 28 & 28 & 32 & 36 & 36 & 36 & 28 & 24 & 24 \\
			\hline
			5 & \shaded & \shaded & \shaded & \shaded & \shaded & \shaded & \shaded & \shaded & \shaded & \shaded & \shaded & 32 \\
			\hline
			6 & \shaded & \shaded & \shaded & \shaded & \shaded & \shaded & \shaded & \shaded & \shaded & \shaded & \shaded & 36 \\
			\hline
		\end{tabular}
		\caption{Minimum $k$-strong defining set sizes in 
			main classes of order 6.} 
		\label{min_ls6}
	\end{table}
	
	Table~\ref{min_ls6} shows minimum $k$-strong defining set sizes for Latin squares in the main classes of order 6,
	using the main class numbering from~\cite{CRC}. Here, note that 6.12 is a main class with no intercalates ($2\times 2$ subsquares) and sds$(L,4)=36$ implies $L$ can be decomposed into 9 disjoint intercalates; otherwise sds$(L,4)<36$. The results for $k=1$ are the same as in~\cite{ABK03} using a different numbering.
	

	\section{Minimal  $k$-strong PLS in $B_n$ for larger $k$}
	
	For any $0\leq k < n$, define  
	$Q_{n,k}=D_0\cup D_1\cup \dots \cup D_k$.    
	Note that if $k=\lfloor (n-3)/2\rfloor$, $Q_{n,k}=Q_n$. 
	
	\begin{theorem}
		Let $x\leq n$ such that $n-4n/x\geq 30$. 
		there exists a Latin trade in $B_n$ which
		intersects the {\rm PLS} $Q_{n,n-\lceil 4n/x\rceil -30}$ at most $10\log_{4}(2x)$ times.  
		\label{trend}
	\end{theorem}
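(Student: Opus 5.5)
The plan is to build a single good tessellation of ${\mathcal E}_n$ and turn it into a trade using Theorem \ref{tessa}. Under that correspondence a vertex $(i,j)$ of the tessellation is the cell $(i,j)$ of the trade, and that cell lies on the diagonal $D_{j-i \Mod{n}}$.

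Write $w=\lceil 4n/x\rceil+29$. Then $Q_{n,n-\lceil 4n/x\rceil-30}$ consists exactly of the diagonals $D_0,\dots,D_{n-w-1}$, so the diagonals it misses are $D_{n-w},\dots,D_{n-1}$. Since $(i,j)$ has integer coordinates with $0\le i,j\le n$, these missed cells are the lattice points of the strip
$$S:=\{(i,j): 1\le i-j\le w\}$$
just below the line $y=x$. The hypothesis $n-4n/x\ge 30$ guarantees that this strip is not the whole triangle.

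It therefore suffices to exhibit a good tessellation of ${\mathcal E}_n$ in which at most $10\log_4(2x)$ vertices, other than $(0,n)$ and $(n,0)$, lie outside $S$. Vertices on the line $y=x$ lie on $D_0$ and count against us, so they must be kept to $O(1)$ per scale.

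I would build the tessellation in levels indexed by a scale $s$, starting at $s\approx n$ and shrinking by a factor $4$ each time, until $s$ is comparable to the width $w\approx 4n/x$. This takes about $\log_4(n/(4n/x))+O(1)\le \log_4(2x)$ levels.

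At each level, the region still to be tessellated should be a good triangle or a trapezoid of leg roughly $s$, positioned so that its hypotenuse runs parallel to and just above the strip $S$. On that region I would place a bounded gadget: at most two large isosceles pairs (as in the proofs of Lemmas \ref{doubletool} and \ref{tripletool}) plus rectangles. The gadget must leave a residual region of scale $s/4$ in the same relative position. All of its vertices except at most $10$ must be chosen inside $S$, i.e.\ on lines $y=x-t$ with $1\le t\le w$. This requires placing corners at distance between $1$ and $w$ below the diagonal, which is possible as long as $s\gg w$.

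The rectangles produced at each level must be tessellated via Lemma \ref{recta}, and this is where vertex control is delicate. Lemma \ref{recta} produces uncontrolled internal vertices, so I would arrange every rectangle either to be a square (split by one hypotenuse, adding no new vertices) or to have all its corners on two diagonals both in $S$ or both already counted.

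At the final level, where $s=O(w)$, the remaining piece fits entirely inside $S$ up to a bounded number of corners. The additive $30$ in the hypothesis is meant to absorb these boundary effects and rounding. Any good tessellation of that last piece, for instance by Lemma \ref{recta} together with a last pair of triangles, contributes only $O(1)$ further vertices outside $S$. Summing gives at most $10$ per level over at most $\log_4(2x)$ levels.

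The main obstacle is designing the self-similar gadget explicitly. It must satisfy three constraints at once: (a) it is genuinely good, with no point the vertex of more than $3$ triangles, which requires care where consecutive levels meet; (b) it reduces the scale by a factor $4$; and (c) all but $10$ of its vertices land in the thin strip $S$, despite $S$ having width only about $4n/x$ while the pieces have size $s$. My first attempt would be an ``L-shaped'' decomposition: cut off a square of side about $3s/4$ whose corner sits just below $y=x$, together with two flanking good triangles, and leave a similar triangle of leg about $s/4$ adjacent to the diagonal. I would then check on a small example such as $n=11$ that the vertex count per level is $\le 10$, before writing down the general coordinates.
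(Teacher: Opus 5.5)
Your reduction is a valid sufficient condition: lattice points of $S$ are cells on diagonals $D_{n-w},\dots,D_{n-1}$, which $Q_{n,n-\lceil 4n/x\rceil-30}$ misses. But the proposal stops where the proof has to begin. The self-similar gadget, which is the whole content of the theorem, is never constructed, and some of the constraints you place on it are impossible.

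\begin{itemize}
\item Cells of $D_t$ are the lattice points of the line $y-x=t$, while every hypotenuse of a good triangle lies on a line $x+y=\mathrm{const}$. So no hypotenuse can run parallel to $S$.
\item A good triangle of leg $\ell\geq w$ has its vertices on the diagonals $t-\ell,t,t+\ell$, so at most one of them lies in $S$.
\item The corners of a $p\times q$ rectangle lie on the diagonals $t-p,\,t,\,t+q-p,\,t+q$, which are at least three distinct values. So ``all corners on two diagonals'' cannot be arranged.
\item The vertices that Lemma \ref{recta} creates inside a rectangle spread over its whole diagonal range of length $p+q$. A non-square rectangle at scale $s\gg w$ therefore generally puts many vertices outside $S$.
\item Your one concrete attempt fails: a square of side $3s/4$ does not fit inside a good triangle of leg $s$. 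The largest axis-parallel square in it has side $s/2$, and already $9s^2/16>s^2/2$.
\end{itemize}

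The paper uses two ingredients you are missing.

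\emph{The gadget is imported from} \cite{Sz} (Figure 7). Write $\mathcal{E}_n$ as an $m_0\times(m_0+3)$ rectangle, with $m_0=(n-3)/2$, plus two good triangles. An $m\times(m+3)$ rectangle with $4k+3\leq m\leq 4k+6$ is cut into a bounded number of squares (each split into two good triangles) and a $2k\times 2(k+3)$ rectangle. The latter is a $2$-fold dilate of a $k\times(k+3)$ rectangle, and each stage costs at most $10$ vertices. One iterates with $m_i=\lfloor(m_{i-1}-3)/4\rfloor$ until $m_\alpha\leq n/x+6$. The last rectangle is then tessellated by Lemma \ref{recta}, with no control at all over its interior vertices.

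\emph{Nothing is aligned with a fixed strip.} Since $D_t\oplus(a,b)=D_{t+b-a}$, the finished trade is translated so that the run of diagonals met by the last rectangle lies in the complement of $Q_{n,n-\lceil 4n/x\rceil-30}$. Only the at most $10\alpha$ stage vertices can then meet $Q$.

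If you follow this route, note that the dilations compound. After $\alpha$ stages the last rectangle is physically a $2^\alpha$-dilate of an $m_\alpha\times(m_\alpha+3)$ rectangle, so the size halves, not quarters, per stage. The paper's text calls it $2m_\alpha\times 2(m_\alpha+3)$, which is right only for $\alpha=1$. Its vertices lie on diagonals $t_0+2^\alpha d$, with $d$ ranging over an interval of length $2m_\alpha+3$.

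For odd $n$ this is repaired by first applying the automorphism $(r,c;e)\mapsto(ur,uc;ue)$ of $B_n$ with $u\equiv 2^{-\alpha}\pmod n$. It maps $D_t$ to $D_{ut}$ and makes these diagonals consecutive before translating. So the physical reduction by a factor $4$ per level that you demand is not actually needed. For even $n$ the statement is immediate: the intercalate $I_{i,j}$ with $j-i\equiv n-1$ meets $Q$ in at most two cells.
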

	
	\begin{proof}
		Define a sequence $m_0,m_1,\dots$ where
		$m_0=(n-3)/2$ and
		$m_i=\lfloor (m_{i-1}-3)/4 \rfloor$ for $i>0$. 
		Let $\alpha$ be the smallest integer such that 
		$m_{\alpha}\leq n/x+6$.   
		Then, recursively, we have that: 
		\begin{eqnarray}
			m_{\alpha} & \leq & \frac{m_0}{4^{\alpha}}
			= \frac{(n-3)}{2\times 4^{\alpha}}.
			\label{equationCC}
		\end{eqnarray} 
		
		Since $n-4n/x\geq 30$, 
		$n/x+6\leq (n-6)/4<m_0$. 
		Thus $\alpha\geq 1$ and $m_{\alpha-1}$  exists.
		By definition, $m_{\alpha-1}>n/x+6$. 
		Therefore:
		\begin{eqnarray}
			m_{\alpha} 
			=  
			\lfloor (m_{\alpha-1}-3)/4 \rfloor \geq (m_{\alpha-1}-6)/4
			& > & n/4x. 
			\label{equationCC2}
		\end{eqnarray}
		
		Next, tessellate ${\mathcal E}_n$ into 
		an $m_0\times (m_0+3)$ rectangle and two good triangles. 
		We will proceed to recursively tessellate (in a ``good'' way) 
		this rectangle into a number of good triangles and 
		increasingly small rectangles of dimension 
		$2m_i\times 2(m_i+3)$, for each $i\geq 0$. 
		This process is shown precisely in Figure 7, which is directly taken from \cite{Sz}.
		Note that at each stage we effectively tessellate an $m_i\times (m_i+3)$ rectangle,  magnifying this by a factor of $2$.  
		
		At each stage, from Figure 7, at most $10$ points of vertices outside the next rectangle are specified. 
		Thus the corresponding Latin trade will have at most $10\alpha$ cells outside the final $2m_{\alpha}\times 2(m_{\alpha}+3)$ rectangle, which by Lemma \ref{recta}, itself has a good tessellation.    
		
		Now, since $4m_{\alpha}+6\leq 4n/x+30$, by translation (using the operation $\oplus$) of the Latin trade constructed, we can position this final rectangle so that it does not intersect 
		$Q_{n,n-\lceil 4n/x\rceil -30}$.  
		Therefore the trade intersects $Q_{n,n-\lceil 4n/x\rceil-30}$ 
		at most
		$10\alpha$ times. But from (\ref{equationCC}) and (\ref{equationCC2}), 
		$$\alpha\leq \log_4{n}-\log_4{2m_{\alpha}}
		\leq \log_4{n} -\log_4{(n/(2x))} = \log_{4}(2x).$$   	
	\end{proof}
	
	\section{Conclusion}
	
	Lemma \ref{CHAIN} motivates the following open problem. If $P$ and $Q$ are, respectively, 
	minimally $k$-strong and minimally $k+1$-strong defining sets of a Latin square $L$ of order $n$, where $P\subset Q$, what is the possible size of $|P\setminus Q|$? 
	In this paper we have seen that the size in general can be quadratic in $n$, for example in Section 2, the size is $n^2/4$ for $1\leq k\leq 3$.  
	Also, in Section $3$, $|P_n\setminus Q_n|=(n^2-1)/8$ where $n$ is odd, $Q_n$ is a minimally $1$-strong defining set, 
	$P_n$ is a minimally $2$-strong defining set and $Q_n\subset P_n\subset B_n$.

	Finally, one might conjecture, given the results in this paper and the literature, that for odd $n$, $Q_{n,n-\lceil n/x\rceil }$ (defined in Section 5) is asymptotically minimally $\Theta(\log{x})$-strong.  
	From Theorem \ref{trend}, to prove such a conjecture, it would suffice to show that any Latin trade in $B_n$ intersects $Q_{n,n-\lceil n/x\rceil}$ at least $\Theta(\log{x})$ times. We leave this as an open problem.

	\begin{figure}
		\begin{center}
			\begin{tikzpicture}[
				box/.style={draw, thick},
				shade/.style={pattern=north east lines},
				font=\small
				]
				
				\begin{scope}
					\draw[box] (0,0) rectangle (6,4.5);
					\draw (3,1.5) -- (3,4.5);
					\draw[box] (0,1.5) -- (6,1.5);
					
					\draw[box,shade] (1.5,0) rectangle (6,1.5);
					
					\node at (1.5,2.7) {$2k+3$};
					\node at (4.5,2.7) {$2k+3$};
					\node at (0.75,0.75) {$2k$};
					\node at (4,0.75) {$2k\times2(k+3)$};
					
					\node[rotate=90] at (-0.35,2) {$4k+3$};
				\end{scope}
				
				\begin{scope}[xshift=8cm]
					\draw[box] (0,0) rectangle (5.5,4.5);
					\draw (0,2) -- (2.5,2);
					\draw (1.5,0) -- (1.5,2);
					\draw (2.5,1.5) -- (2.5,4.5);
					\draw (2,1.5) -- (2,2);
					
					\draw[box,shade] (1.5,0) rectangle (5.5,1.5);
					
					\node at (1.25,3.25) {$2k+3$};
					\node at (4,2.7) {$2k+4$};
					\node at (0.75,0.75) {$2k+1$};
					\node at (3.5,0.75) {$2k\times2(k+3)$};
					
					\node[rotate=90] at (-0.35,2) {$4k+4$};
				\end{scope}
				
				\begin{scope}[yshift=-6cm]
					\draw[box] (0,0) rectangle (6,4.5);
					\draw (0,2) -- (2.5,2);
					\draw (2,1.5) -- (2.5,1.5);
					\draw (2,0) -- (2,2);
					\draw (2.5,1) -- (2.5,4.5);
					

					\draw[box,shade] (2,0) rectangle (6,1);
					
					\node at (1.25,3.25) {$2k+3$};
					\node at (4.5,2.7) {$2k+5$};
					\node at (1,1) {$2k+2$};
					\node at (4,0.5) {$2k\times2(k+3)$};
					
					\node[rotate=90] at (-0.35,2) {$4k+5$};
				\end{scope}
				
				\begin{scope}[xshift=8cm,yshift=-6cm]
					\draw[box] (0,0) rectangle (5,4);
					\draw (2,0) -- (2,4);
					\draw (0,2) -- (2,2);
					
					\draw[box,shade] (2,0) rectangle (5,1);
					
					\node at (1,3) {$2k+3$};
					\node at (1,1) {$2k+3$};
					\node at (3.5,2.5) {$2k+6$};
					\node at (3.5,0.5) {$2k\times2(k+3)$};
					
					\node[rotate=90] at (-0.35,2) {$4k+6$};
				\end{scope}
				
			\end{tikzpicture}
			\caption{Figure 1 from \cite{Sz}}
		\end{center}
		\label{szzz2}
	\end{figure}

	\let\oldthebibliography=\thebibliography
	\let\endoldthebibliography=\endthebibliography
	\renewenvironment{thebibliography}[1]{%
		\begin{oldthebibliography}{#1}%
			\setlength{\parskip}{0.4ex plus 0.1ex minus 0.1ex}%
			\setlength{\itemsep}{0.4ex plus 0.1ex minus 0.1ex}%
		}%
		{%
		\end{oldthebibliography}%
	}
	

\end{document}